\newcommand{\shadetheboxesPM}[1]{
    \foreach \x/\y in {#1}
    \fill[pattern color = black!75, pattern=north east lines] (\x,\y) rectangle +(1,1);
}
\newcommand{\drawthegrid}[1]{
    \draw (0.01,0.01) grid (#1+0.99,#1+0.99);
}
\newcommand{\drawverticallines}[3]{
    \foreach \x in {#2}
    \draw[line width=#3] (\x+0.01,0.01) -- (\x+0.01,#1+0.99);
}
\newcommand{\drawhorizontallines}[3]{
    \foreach \y in {#2}
    \draw[line width=#3] (0.01,\y+0.01) -- (#1+0.99,\y+0.01);
}
\newcommand{\drawtheclpattern}[1]{
    \foreach \x/\y in {#1}
    \filldraw (\x,\y) circle (6pt);
}
\newcommand{\drawclpattern}[2]{
	\foreach[count=\x] \y in {#1}
	{
		\filldraw (\x,\y) circle (#2 pt);
	}
}
\newcommand{\drawspecialbox}[1]{
    \foreach \x/\y/\z/\w/\A in {#1}
    {
        \fill[color = white!100, opacity=1, rounded corners = 1.5pt] (\x+0.125,\y+0.125) rectangle (\z-0.125,\w-0.125);
        \draw[color = black, rounded corners = 1.5pt] (\x+0.125,\y+0.125) rectangle (\z-0.125,\w-0.125);
        \fill[black] (\x/2+\z/2,\y/2+\w/2) node {\A};
    }
}
\newcommand{\drawspecialboxlarge}[1]{
    \foreach \x/\y/\z/\w/\A in {#1}
    {
        \fill[color = white!100, opacity=1, rounded corners = 1.5pt] (\x+0.125,\y+0.125) rectangle (\z-0.125,\w-0.125);
        \draw[color = black, rounded corners = 1.5pt] (\x+0.125,\y+0.125) rectangle (\z-0.125,\w-0.125);
        \fill[black] (\x/2+\z/2,\y/2+\w/2) node {\Large \A};
    }
}
\newcommand{\drawsolidshadedbox}[1]{
    \foreach \x/\y/\z/\w/\A in {#1}
    {
        \fill[color = gray!50, opacity=1, rounded corners=1.5pt] (\x+0.125,\y+0.125) rectangle (\z-0.125,\w-0.125);
        \draw[color = black, rounded corners=1.5pt] (\x+0.125,\y+0.125) rectangle (\z-0.125,\w-0.125);
        \fill[black] (\x/2+\z/2,\y/2+\w/2) node {\A};
    }
}
\newcommand{\drawlabels}[1]{
	\foreach \x/\y/\lab in {#1}
	{
		\draw (\x + 0.5,\y + 0.5) node {\lab};
	}
}
\newcommandx{\patt}[9][4={},5={},6={},7={},8={},9=4]
{
	\scalebox{#1}
	{
		\begin{tikzpicture}[baseline=(current bounding box.center)]
			\useasboundingbox (0.0,-.3) rectangle (#2+1,#2+1.3);
			\shadetheboxesPM{#4}
			\draw (0.01,0.01) grid (#2+1-0.01,#2+1-0.01);

			\drawsolidshadedbox{#6}
			\drawspecialbox{#7}
			\drawspecialboxlarge{#5}
			\drawclpattern{#3}{#9}
			\drawlabels{#8}
		\end{tikzpicture}
	}
}
\newcommandx{\cpatt}[8][4={},5={},6={},7={},8={}]
{
	\scalebox{#1}
	{
		\begin{tikzpicture}[baseline=(current bounding box.center)]
			\useasboundingbox (0.0,-.3) rectangle (#2+1,#2+1.3);
			\shadetheboxesPM{#4}
			\draw (0.01,0.01) grid (#2+1-0.01,#2+1-0.01);

			\drawsolidshadedbox{#6}
			\drawspecialbox{#7}
			\drawspecialboxlarge{#5}
			\drawclpattern{#3}{4}

			\foreach \x/\y in {#8}
			{
				\draw[line width=1] (\x,\y) circle (7 pt);
			}
		\end{tikzpicture}
	}
}
\newcommandx{\metapatt}[8][6={},7={},8={}]
{
    \scalebox{#1}
    {
        \begin{tikzpicture}[baseline=(current bounding box.center)]
					\foreach \width/\height in {#2}
					{
						\useasboundingbox (0.0,-.3) rectangle (\width+1,\height+1.3);
            \shadetheboxesPM{#6}

            \foreach \pos/\type in {#4}
            {
                \ifthenelse{\equal{\type}{v}}
                {
                    \drawverticallines{\height}{\pos}{1.7pt}
                }
                {
								    \ifthenelse{\equal{\type}{d}}
                    {
                      \draw[densely dashed] (\pos,0) -- (\pos,\height+1);
                    }
										{
											\drawhorizontallines{\width}{\pos}{1.7pt}
										}
                }
            }

            \foreach \pos/\type in {#3}
            {
                \ifthenelse{\equal{\type}{v}}
                {
                    \drawverticallines{\height}{\pos}{0.6pt}
                }
                {
										\drawhorizontallines{\width}{\pos}{0.6pt}
                }
            }

            \drawsolidshadedbox{#8}
            \drawspecialbox{#7}

            \foreach \x/\y/\type in {#5}
            {
                \ifthenelse{\equal{\type}{a}}
                {
                    \draw (\x,\y) circle (6pt);
                    \filldraw (\x,\y) circle (3pt);
                }
                {
                    \filldraw (\x,\y) circle (4pt);
                }
            }
					}
        \end{tikzpicture}
    }
}
\newcommandx{\dpatt}[9][6={},7={},8={},9={}]
{
    \scalebox{#1}
    {
        \begin{tikzpicture}[baseline=(current bounding box.center)]
					\foreach \width/\height in {#2}
					{
						\useasboundingbox (0.0,-.3) rectangle (\width+1,\height+1.3);
            \shadetheboxesPM{#6}

            \foreach \pos/\type in {#4}
            {
                \ifthenelse{\equal{\type}{v}}
                {
                    \drawverticallines{\height}{\pos}{1.7pt}
                }
                {
								    \ifthenelse{\equal{\type}{d}}
                    {
                      \draw[densely dashed] (\pos,0) -- (\pos,\height+1);
                    }
										{
											\drawhorizontallines{\width}{\pos}{1.7pt}
										}
                }
            }

            \foreach \pos/\type in {#3}
            {
                \ifthenelse{\equal{\type}{v}}
                {
                    \drawverticallines{\height}{\pos}{0.6pt}
                }
                {
										\drawhorizontallines{\width}{\pos}{0.6pt}
                }
            }

            \drawsolidshadedbox{#8}
            \drawspecialbox{#7}

            \foreach \x/\y/\type in {#5}
            {
                \ifthenelse{\equal{\type}{a}}
                {
                    \draw9 (\x,\y) circle (6pt);
                    \filldraw (\x,\y) circle (3pt);
                }
                {
                    \filldraw (\x,\y) circle (4pt);
                }
            }

						\drawlabels{#9}
					}
        \end{tikzpicture}
    }
}
\newcommand{\mpattern}[4]{										
  \raisebox{0.6ex}{
  \begin{tikzpicture}[scale=0.35, baseline=(current bounding box.center), #1]
  	\useasboundingbox (0.0,-0.1) rectangle (#2+1.4,#2+1.1);

    \shadetheboxesPM{#4}

    \drawthegrid{#2}

    \drawtheclpattern{#3}

  \end{tikzpicture}}
}
\pgfmathsetmacro{\patttablescale}{1.05}
\pgfmathsetmacro{\pattdispscale}{0.80}
\pgfmathsetmacro{\patttextscale}{0.6}
\colorlet{lightgray}{black!15}
\theoremstyle{plain}
\newtheorem{theorem}{Theorem}[section]
\newtheorem{proposition}[theorem]{Proposition}
\newtheorem{corollary}[theorem]{Corollary}
\newtheorem{lemma}[theorem]{Lemma}
\theoremstyle{definition}
\newtheorem{definition}[theorem]{Definition}
\newtheorem{remark}[theorem]{Remark}
\newtheorem{example}[theorem]{Example}
\newcommand{\mf}[1]{\mbox{$\mathfrak #1$}}
\newcommand{\ms}[1]{\mbox{$\mathscr #1$}}
\newcommand{\Grid}{\mathrm{Grid}}
\newcommand{\Av}{\mathrm{Av}}
\newcommand{\st}{\mathrm{st}}
\newcommand{\id}{\mathrm{id}}
\newcommand*{\pw}{3 pt}
\newcommand*{\lw}{1 pt}
\newcommand{\thegridext}[2][2]{
    \pgfmathsetmacro\max{#1+#2}
    \pgfmathsetmacro\maxes{#1-2}
    \foreach \i in {1,...,#2}{
        \draw[line width=\lw] (\i-1, \max-2) -- (\i-1, #2-\i) -- (\max-1, #2-\i);
    }
    \ifnum#1>2
    \foreach \i in {1,...,\maxes}{
        \draw[line width=\lw] (0, \max-\i-1) -- (#2+\i-1, \max-\i-1) -- (#2+\i-1, 0);
    }
    \fi
        \draw[line width=\lw] (0, #2) -- (\max-1, #2) -- (\max-1, 0);

    \draw[line width=\lw] (\max-2, #2) -- (\max-2, 0);
}
\newcommand*{\radius}{2 cm}
\newcommand{\thenodesA}[2][(0,0)]{
    \begin{scope}[shift={#1}]
    \tikzstyle{mynode}=[circle, draw, thin,fill=gray!20, scale=0.8]
    \foreach \i in {1,...,#2}{
        \foreach \j in {\i,...,#2}{
            \filldraw (\j-0.5,-\i-0.5) circle (\pw);
            \node[mynode] (\i \j) at (\j-0.5,-\i-0.5) {\tiny$\i\j$};
        }
    }
    \ifthenelse{#2>2}{
    \pgfmathtruncatemacro\rowstop{#2-2}
    \pgfmathtruncatemacro\columnstop{#2-1}
    \foreach \i in {1,...,\rowstop}{
        \pgfmathtruncatemacro\ii{\i+1}
        \foreach \j in {\ii,...,\columnstop}{
            \pgfmathtruncatemacro\jj{\j+1}
            \foreach \k in {\ii,...,\j}{
                \foreach \m in {\jj,...,#2}{
                    \path[thick] (\i \j) edge (\k \m);
                }
            }
        }
    }
    }{}
    \end{scope}
}
\newcommand{\thenodesAsmall}[2][(0,0)]{
    \begin{scope}[shift={#1}]
    \tikzstyle{mynode}=[circle, draw, thin,fill=gray!20, scale=0.4]
    \foreach \i in {1,...,#2}{
        \foreach \j in {\i,...,#2}{
            \filldraw (\j-0.5,-\i-0.5) circle (\pw);
            \node[mynode] (\i \j) at (\j-0.5,-\i-0.5) {\tiny$\i\j$};
        }
    }
    \ifthenelse{#2>2}{
    \pgfmathtruncatemacro\rowstop{#2-2}
    \pgfmathtruncatemacro\columnstop{#2-1}
    \foreach \i in {1,...,\rowstop}{
        \pgfmathtruncatemacro\ii{\i+1}
        \foreach \j in {\ii,...,\columnstop}{
            \pgfmathtruncatemacro\jj{\j+1}
            \foreach \k in {\ii,...,\j}{
                \foreach \m in {\jj,...,#2}{
                    \path[thick] (\i \j) edge (\k \m);
                }
            }
        }
    }
    }{}
    \end{scope}
}
\newcommand{\thesubnodesA}[2]{
    \tikzstyle{mynode}=[circle, draw, thin,fill=gray!20, scale=0.8]
    \foreach \i in {1,...,#1}{
        \foreach \j in {#1,...,#2}{
            \filldraw (\j-0.5,-\i-0.5) circle (\pw);
            \node[mynode] (\i \j) at (\j-0.5,-\i-0.5) {\tiny$\i\j$};
        }
    }
    \ifthenelse{#2>2}{
    \pgfmathtruncatemacro\rowstop{#1-1}
    \pgfmathtruncatemacro\columnstop{#2-1}
    \foreach \i in {1,...,\rowstop}{
        \pgfmathtruncatemacro\ii{\i+1}
        \foreach \j in {#1,...,\columnstop}{
            \pgfmathtruncatemacro\jj{\j+1}
            \foreach \k in {\ii,...,#1}{
                \foreach \m in {\jj,...,#2}{
                    \path[thick] (\i \j) edge (\k \m);
                }
            }
        }
    }
    }{}
}
\newcommand{\thenodesB}[2][(0,0)]{
    \begin{scope}[shift={#1}]
    \tikzstyle{mynode}=[circle, draw, thin,fill=gray!20, scale=0.8]
    \foreach \i in {1,...,#2}{
        \foreach \j in {\i,...,#2}{
            \filldraw (\j-0.5,-\i-0.5) circle (\pw);
            \node[mynode] (\i \j) at (\j-0.5,-\i-0.5) {\tiny$\i\j$};
        }
    }
    \ifthenelse{#2>2}{
    \pgfmathtruncatemacro\rowstop{#2-2}
    \foreach \i in {1,...,\rowstop}{
        \pgfmathtruncatemacro\ii{\i+1}
        \pgfmathtruncatemacro\iii{\i+2}
        \foreach \j in {\iii,...,#2}{
            \pgfmathtruncatemacro\jj{\j+1}
            \pgfmathtruncatemacro\ju{\j-1}
            \foreach \k in {\ii,...,\ju}{
                \foreach \m in {\k,...,\ju}{
                    \path[thick] (\i \j) edge (\k \m);
                }
            }
        }
    }
    }{}
\end{scope}
}
\newcommand{\thenodesBsmall}[2][(0,0)]{
    \begin{scope}[shift={#1}]
    \tikzstyle{mynode}=[circle, draw, thin,fill=gray!20, scale=0.4]
    \foreach \i in {1,...,#2}{
        \foreach \j in {\i,...,#2}{
            \filldraw (\j-0.5,-\i-0.5) circle (\pw);
            \node[mynode] (\i \j) at (\j-0.5,-\i-0.5) {\tiny$\i\j$};
        }
    }
    \ifthenelse{#2>2}{
    \pgfmathtruncatemacro\rowstop{#2-2}
    \foreach \i in {1,...,\rowstop}{
        \pgfmathtruncatemacro\ii{\i+1}
        \pgfmathtruncatemacro\iii{\i+2}
        \foreach \j in {\iii,...,#2}{
            \pgfmathtruncatemacro\jj{\j+1}
            \pgfmathtruncatemacro\ju{\j-1}
            \foreach \k in {\ii,...,\ju}{
                \foreach \m in {\k,...,\ju}{
                    \path[thick] (\i \j) edge (\k \m);
                }
            }
        }
    }
    }{}
\end{scope}
}
\newcommand{\thesubnodesB}[2]{
    \tikzstyle{mynode}=[circle, draw, thin,fill=gray!20, scale=0.8]
    \foreach \i in {1,...,#1}{
        \foreach \j in {#1,...,#2}{
            \filldraw (\j-0.5,-\i-0.5) circle (\pw);
            \node[mynode] (\i \j) at (\j-0.5,-\i-0.5) {\tiny$\i\j$};
        }
    }
    \ifthenelse{#2>2}{
    \pgfmathtruncatemacro\rowstop{#1-1}
    \pgfmathtruncatemacro\columnbegin{#1+1}
    \foreach \i in {1,...,\rowstop}{
        \pgfmathtruncatemacro\ii{\i+1}
        \foreach \j in {\columnbegin,...,#2}{
            \pgfmathtruncatemacro\columnstop{\j-1}
            \foreach \k in {\ii,...,#1}{
                \foreach \m in {#1,...,\columnstop}{
                    \path[thick] (\i \j) edge (\k \m);
                }
            }
        }
    }
    }{}
}
\newcommand{\completegraph}[2][(0,0)]{
    \begin{scope}[shift={#1}]
    \foreach \s in {1,...,#2}{
        \node[draw, circle] (\s) at ({-360/#2 * (\s - 1)}:\radius) {\tiny$\s$}; 
    }
    \foreach \s in {1,...,#2}{
        \foreach \t in {\s,...,#2}{
            \pgfmathsetmacro\tt{\t-1}
            {\ifnum\s=\t
            {}
            \else
            \draw (\t) -- (\s); 
            \fi
            }
        }
    }
    \end{scope}
}
\newcommand{\setpartitions}[2][(0,0)]{
    \begin{scope}[shift={#1}]
        \foreach \s in {1,...,#2}{
            \node (\s) at (\s-1,0) {\tiny$\s$};
        }
        \foreach \s in {1,...,#2}{
            \foreach \t in {\s,...,#2}{
            {\ifnum\s=\t
            {}
            \else
            \draw (\s) to [out=90, in=90] (\t);
            \fi
            }
            }
        }
    \end{scope}
}
\title[Occurrence graphs of patterns in permutations]
{Occurrence graphs of patterns in permutations}
\author[Bjarni Jens Kristinsson]{Bjarni Jens Kristinsson$^{\star}$}
\address{Department of Mathematics, University of Iceland, Reykjavik, Iceland}
\email{bjk17@hi.is}
\author[Henning Ulfarsson]{Henning Ulfarsson$^{\star}$}
\address{School of Computer Science, Reykjavik University, Reykjavik, Iceland}
\email{henningu@ru.is}
\thanks{$^{\star}$ Research partially supported by grant 141761-051 from the
Icelandic Research Fund.}
\subjclass[2010]{Primary: 05A05; Secondary: 05A15}
\begin{document}

\begin{abstract}
    We define the \emph{occurrence graph} $G_p(\pi$) of a pattern $p$ in a
    permutation $\pi$ as the graph with the occurrences of $p$ in $\pi$ as
    vertices and edges between the vertices if the occurrences differ by exactly
    one element. We then study properties of these graphs. The main theorem in
    this paper is that every \emph{hereditary property} of graphs gives rise to
    a \emph {permutation class}.

\noindent \\
\emph{Keywords:} permutation patterns, graphs
\end{abstract}

\maketitle
\thispagestyle{empty}

\section{Introduction}
\label{sec:introduction}
\pagenumbering{arabic}
\setcounter{page}{1}

We define the \emph{occurrence graph} $G_p(\pi$) of a pattern $p$ in a
permutation $\pi$ as the graph where each vertex represents an occurrences of
$p$ in $\pi$. Vertices share an edge if the occurrences they represent differ by
exactly one element. We study properties of these graphs and show that every
\emph{hereditary property} of graphs gives rise to a \emph{permutation class}.

The motivation for defining these graphs comes from the algorithm discussed
in the proof of the \emph{Simultaneous Shading Lemma} by Claesson, Tenner and
Ulfarsson in~\cite{simshadinglemma}. The steps in that algorithm can
be thought of as constructing a path in an occurrence graph, terminating at a
desirable occurrence of a pattern.

\section{Basic definitions}
\label{sec:basic-definitions}

In this article we will be working with permutations and undirected, simple
graphs. The reader does not need to have prior knowledge of either as we will
define both.

\begin{definition}\label{defn:graph}
  A \emph{graph} is an ordered pair $G=(V,E)$ where $V$ is a set of
  \emph{vertices} and $E$ is a set of two element subsets of $V$. The elements
  $\{u,v\} \in E$ are called \emph{edges} and connect the vertices. Two vertices
  $u$ and $v$ are \emph{neighbors} if $\{u,v\} \in E$. The \emph{degree} of a
  vertex $v$ is the number of neighbors it has. A graph $G' = (V',E')$ is a
  \emph{subgraph} of $G$ if $V' \subseteq V$ and $E' \subseteq \bigl\{ \{u,v\}
  \in E \colon u,v \in V' \bigr\}$.
\end{definition}

The reader might have noticed that our definition of a graph excludes those with
loops and multiple edges between vertices. We often write $uv$ as shorthand for
$\{u,v\}$ and in case of ambiguity we use $V(G)$ and $E(G)$ instead of $V$ and
$E$.

\begin{definition}\label{defn:induced-and-isomorphic-graphs}
  Let $V'$ be a subset of $V$. The \emph{induced subgraph} $G[V']$
  is a subgraph of $G$ with vertex set $V'$ and edges $\{ uv \in E
  \colon u,v \in V' \}$.

  Two graphs $G$ and $H$ are \emph{isomorphic} if there exist a
  bijection from $V(G)$ to $V(H)$ such that two vertices in $G$ are
  neighbors if and only if the corresponding vertices (according to
  the bijection) in $H$ are neighbors. We denote this with $G \cong
  H$.
\end{definition}

We let $\llbracket 1,n \rrbracket$ denote the integer interval $\{1,
\ldots, n\}$.

\begin{definition}\label{defn:permutation}
  A \emph{permutation of length $n$} is a bijective function $\sigma
  \colon \llbracket 1,n \rrbracket \to \llbracket 1,n \rrbracket$.
  We denote the permutation with $\sigma =
  \sigma(1)\sigma(2)\cdots\sigma(n)$. The permutation $\id_n = 12
  \cdots n$ is the \emph{identity permutation} of length $n$.
\end{definition}

The \emph{set of permutations} of length $n$ is denoted by $\mf{S}_n$.
The set of all permutations is $\mf{S} = \cup_{n=0}^{+\infty}
\mf{S}_n$. Note that $\mf{S}_0 = \{ \mathscr{E} \}$, where
$\mathscr{E}$ is the empty permutation, and $\mf{S}_1 = \{ 1 \}$.
There are $n!$ permutations of length $n$.

\begin{definition}\label{defn:grid-plot}
  A \emph{grid plot} or \emph{grid representation} of a permutation
  $\pi\in\mf{S}_n$ is the subset $\Grid(\pi) =
  \bigl\{\left(i,\pi(i)\right) \colon i \in \llbracket 1,n
  \rrbracket \bigr\}$ of the Cartesian product $\llbracket 1,n
  \rrbracket^2 = \llbracket 1,n \rrbracket \times \llbracket 1,n
  \rrbracket$.
\end{definition}

\begin{example}\label{ex:grid-plot}
  Let $\pi = 42135$. The grid representation of $\pi$ is

  \begin{figure}[htbp]
    \begin{gather*}
    \Grid(42135) \;=\;
      \begin{tikzpicture}[scale=.4, baseline={([yshift=-3pt]current bounding box.center)}]
        \def \n {5}
        \foreach \x in {1,...,\n} {
          \draw[gray] (0,\x) -- (\n+1,\x);
          \draw[gray] (\x,0) -- (\x,\n+1);
        }
        \foreach \x in {(1,4),(2,2),(3,1),(4,3),(5,5)} {\fill[black] \x circle (5pt);}
      \end{tikzpicture}
      \end{gather*}
  \end{figure}
\end{example}

The central definition in the theory of permutation patterns is how permutations
lie inside other (larger) permutations. Before we define that
precisely we need a preliminary definition:

\begin{definition}\label{defn:pattern-standardisation}
  Let $a_1, \ldots, a_k$ be distinct integers. The \emph
  {standardisation} of the string $a_1 \cdots a_k$ is the
  permutation $\sigma\in\mf{S}_k$ such that $a_1 \cdots a_k$ is
  order isomorphic to $\sigma(1) \cdots \sigma(k)$. In other words,
  for every $i \neq j$ we have $a_i < a_j$ if and only if
  $\sigma(i)<\sigma(j)$. We denote this with $\st(a_1 \cdots a_k) =
  \sigma$.
\end{definition}

For example $\st(253)=132$ and $\st(132)=132$.

\begin{definition}\label{defn:pattern-containment-and-occurrences}
  Let $p$ be a permutation of length $k$. We say that the permutation
  $\pi\in\mf{S}_n$ \emph{contains} $p$ if there exist indices $1 \le i_1 <
  \cdots < i_k \le n$ such that $\st\bigl(\pi(i_1) \cdots \pi(i_k)\bigr) = p$.
  The subsequence $\pi(i_1) \cdots \pi(i_k)$ is an \emph{occurrence} of $p$ in
  $\pi$ with the \emph{index set} $\{i_1, \ldots, i_k\}$. The increasing
  sequence $i_1 \cdots i_k$ will be used to denote the order preserving
  injection $i \colon \llbracket 1,k \rrbracket \rightarrow \llbracket 1,n
  \rrbracket, j \mapsto i_j$ which we call the \emph{index injection} of $p$
  into $\pi$ for this particular occurrence.

  The set of all index sets of $p$ in $\pi$ is the \emph{occurrence
  set} of $p$ in $\pi$, denoted with $V_p(\pi)$. If $\pi$ does not
  contain $p$, then $\pi$ \emph{avoids} $p$. In this context the
  permutation $p$ is called a (\emph{classical permutation}) \emph
  {pattern}.
\end{definition}

Unless otherwise stated, we write the index set $\{ i_1, \ldots, i_n
\}$ in ordered form, i.e.,~such that $i_1 < \cdots < i_n$, in
accordance with how we write the index injection.

The set of all permutations that avoid $p$ is $\Av(p)$. More
generally for a set of patterns $M$ we define \[ \Av(M) = \bigcap_{p
\in M} \Av(p) .\]

\begin{example}\label{ex:pattern-containment-and-occurrences}
  The permutation $42135$ contains five occurrences of the pattern
  $213$, namely $425$, $415$, $435$, $213$ and $215$. The occurrence
  set is
  \[
    V_{213}(42135) = \bigl\{ \{1,2,5\}, \{1,3,5\}, \{1,4,5\},
                             \{2,3,4\}, \{2,3,5\} \bigr\}.
  \]
  The permutation $42135$ avoids the pattern $132$.
\end{example}

\section{Occurrence graphs}
\label{sec:occurrence-graphs}

We now formally define occurrence graphs.

\begin{definition}\label{defn:occurrence-graph}
  For a pattern $p$ of length $k$ and a permutation $\pi$ we define the
  \emph{occurrence graph} $G_p(\pi)$ of $p$ in $\pi$ as follows:
  \begin{itemize}
    \item The set of vertices is $V_p(\pi)$, the occurrence set of
      $p$ in $\pi$.
    \item $uv$ is an edge in $G_p(\pi)$ if the vertices $u = \{u_1,
      \ldots, u_k\}$ and $v = \{v_1, \ldots, v_k\}$ in $V_p(\pi)$
      differ by exactly one element, i.e.,~if \[\left|u \setminus
      v\right| = \left|v \setminus u\right| = 1 .\]
  \end{itemize}
\end{definition}

\begin{example}\label{ex:occurrence-graph}
  In Example~\ref{ex:pattern-containment-and-occurrences} we derived
  the occurrence set $V_{213}(42135)$. We compute the edges of
  $G_{213}(42135)$ by comparing the vertices two at a time to see if
  the sets differ by exactly one element. The graph is shown in
  Figure~\ref{fig:graph-of-G_213(42135)}.

  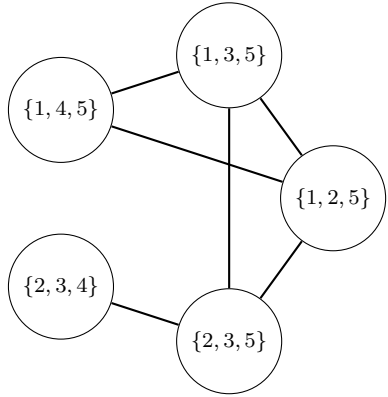
\begin{figure}[htbp]
    \begin{gather*}
    \begin{tikzpicture}
    \tikzstyle{vertex}=[draw,circle]
    \tikzstyle{edge} = [draw,thick,-,black]
    \def \radius {2cm}
    \node[vertex] (v125) at ({  0}:\radius) {\tiny{$\{1,2,5\}$}};
    \node[vertex] (v135) at ({ 72}:\radius) {\tiny{$\{1,3,5\}$}};
    \node[vertex] (v145) at ({144}:\radius) {\tiny{$\{1,4,5\}$}};
    \node[vertex] (v234) at ({216}:\radius) {\tiny{$\{2,3,4\}$}};
    \node[vertex] (v235) at ({288}:\radius) {\tiny{$\{2,3,5\}$}};
    \draw[edge] (v125) -- (v135) -- (v145) -- (v125) -- (v235);
    \draw[edge] (v135) -- (v235) -- (v234);
    \end{tikzpicture}
  \end{gather*}
  \caption{The occurrence graph $G_{213}(42135)$}
  \label{fig:graph-of-G_213(42135)}
  \end{figure}
\end{example}

\begin{remark}\label{remark:occurrence-graph}
    For a permutation $\pi$ of length $n$ the graph $G_{\mathscr{E}}(\pi)$ is a
    graph with one vertex and no edges and $G_1(\pi)$ is a clique on $n$
    vertices.
\end{remark}

Following the definition of these graphs there are several natural questions
that arise. For example, for a fixed pattern $p$, which occurrence graphs
$G_p(\pi)$ satisfy a given graph property, such as being connected or being a
tree? Before we answer questions of this sort we consider a simpler question:
What can be said about the graph $G_{12}(\id_n)$?

\section{The pattern $p=12$ and the identity permutation}
\label{sec:the-pattern-p-12-and-the-identity-permutation}

In this section we only consider the pattern $p=12$ and let $n\geq 2$. For this
choice of $p$ and a fixed $n$ the identity permutation $\pi=1\cdots n$ contains
the most occurrences of $p$. Indeed, every set $\{i,j\}$ with $i \neq j$ is an
index set of $p$ in $\pi$. We can choose this pair in \[{n \choose 2} = {n(n-1)
\over 2} \] different ways. Therefore, this is the size of the vertex set of
$G=G_p(\pi)$.

Every vertex $u=\{i,j\}$ in $G$ is connected to $n-2$ vertices $v=\{i,j'\}$, $j'
\neq j$, and $n-2$ vertices $w=\{i',j\}$, $i' \neq i$. Thus, the degree of every
vertex in $G$ is $2(n-2)$. By summing this over the set of vertices and dividing
by two we get the number of edges in $G$: \[|E(G)| = {n(n-1)(n-2) \over 2} = 3
{n \choose 3}.\]

A triangle in $G$ consists of three vertices $u,v,w$ with edges $uv,vw,wu$. If
$u = \{ i,j \}$ (not neccessarily in ordered form) then we can assume $v$ is $\{
j,k \}$. For this triplet to be a triangle $w$ must connect to both $u$ and $v$,
and therefore $w$ must either be the index set $\{ i,k \}$ or $\{ j,j' \}$ where
$j' \neq i,k$. In the first case, we just need to choose three indices $i,j,k$.
In the second case we start by choosing the common index $k$ and then we choose
the remaining indices. Thus the number of triangles in $G$ is
    \[
        {n \choose 3} + n {n-1 \choose 3} = (n-2){n \choose 3}.
    \]

\begin{example}\label{ex:p12n12345}
  The graph $G_{12}(12345)$ is pictured in Figure~\ref{fig:graph-of-G_12(12345)}.
  It has $10$ vertices, $30$ edges, and $30$ triangles. It also has $5$
  subgraphs isomorphic to $K_4$, one of them highlighted with bolder gray edges
  and gray vertices.

  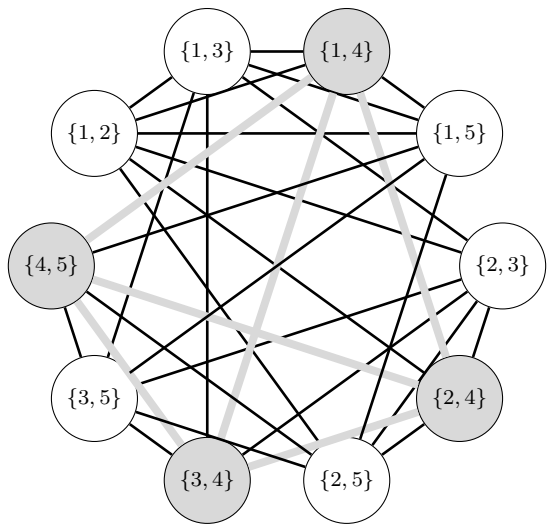
\begin{figure}[htbp]
    \begin{gather*}
      \begin{tikzpicture}
      \tikzstyle{vertex}=[draw,circle]
      \tikzstyle{gertex}=[draw,circle,fill=lightgray]
      \tikzstyle{edge1} = [line width=1pt,opacity=1.0,black]
      \tikzstyle{edge2} = [line width=1pt,opacity=1.0,black]
      \tikzstyle{edge3} = [line width=1pt,opacity=1.0,black]
      \tikzstyle{edge4} = [line width=3pt,opacity=1.0,lightgray]
      \tikzstyle{edge5} = [line width=1pt,opacity=1.0,black]
      \def \radius {3cm}
      \def \s {36} 
      \node[vertex] (v12) at ({ 4*\s}:\radius) {\tiny{$\{1,2\}$}};
      \node[vertex] (v13) at ({ 3*\s}:\radius) {\tiny{$\{1,3\}$}};
      \node[gertex] (v14) at ({ 2*\s}:\radius) {\tiny{$\{1,4\}$}};
      \node[vertex] (v15) at ({ 1*\s}:\radius) {\tiny{$\{1,5\}$}};
      \node[vertex] (v23) at ({    0}:\radius) {\tiny{$\{2,3\}$}};
      \node[gertex] (v24) at ({-1*\s}:\radius) {\tiny{$\{2,4\}$}};
      \node[vertex] (v25) at ({-2*\s}:\radius) {\tiny{$\{2,5\}$}};
      \node[gertex] (v34) at ({-3*\s}:\radius) {\tiny{$\{3,4\}$}};
      \node[vertex] (v35) at ({-4*\s}:\radius) {\tiny{$\{3,5\}$}};
      \node[gertex] (v45) at ({-5*\s}:\radius) {\tiny{$\{4,5\}$}};
      \draw[edge1] (v12) -- (v13) -- (v14) -- (v15) -- (v13);
      \draw[edge1] (v15) -- (v12) -- (v14);
      \draw[edge2] (v12) -- (v23) -- (v24) -- (v25) -- (v23);
      \draw[edge2] (v25) -- (v12) -- (v24);
      \draw[edge3] (v13) -- (v23) -- (v34) -- (v35) -- (v23);
      \draw[edge3] (v35) -- (v13) -- (v34);
      \draw[edge4] (v14) -- (v24) -- (v34) -- (v45) -- (v24);
      \draw[edge4] (v45) -- (v14) -- (v34);
      \draw[edge5] (v15) -- (v25) -- (v35) -- (v45) -- (v25);
      \draw[edge5] (v45) -- (v15) -- (v35);
    \end{tikzpicture}
  \end{gather*}
  \caption{The graph $G_{12}(12345)$}\label{fig:graph-of-G_12(12345)}
  \end{figure}
\end{example}

The following proposition generalizes the observations above to larger cliques.

\begin{proposition}\label{prop:nr-of-cliques}
  For $n>0$, the number of cliques of size $k>3$ in $G_{12}(\id_n)$
  is \[(k+1){n \choose k+1} = n {n-1 \choose k} .\]
\end{proposition}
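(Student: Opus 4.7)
The plan is to recognize $G := G_{12}(\id_n)$ as the triangular graph on $\llbracket 1,n \rrbracket$: its vertices are the $2$-element subsets of $\llbracket 1,n \rrbracket$, and two such subsets are adjacent exactly when they share one element. Equivalently, $G$ is the line graph of the complete graph $K_n$, so a clique in $G$ corresponds to a set of edges of $K_n$ that are pairwise incident (share an endpoint).

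Next I would classify pairwise-incident edge sets. Three pairwise-incident edges of $K_n$ fall into exactly one of two types: either they all share a common endpoint (a \emph{star}), or they form a triangle on three vertices -- this is the same dichotomy already used to count triangles in $G$ earlier in this section. The key structural step of the proof is the claim that, for $k \geq 4$, no $k$-clique of $G$ can involve such a triangle. Indeed, if three of the $k$ edges form a triangle on vertices $\{a,b,c\}$, any fourth edge that is pairwise incident with all three of them would have to share an endpoint with each of $\{a,b\}$, $\{b,c\}$, and $\{a,c\}$; since a single edge of $K_n$ has only two endpoints and cannot simultaneously touch $a$, $b$, and $c$, this is impossible. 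Consequently, every $k$-clique of $G$ with $k > 3$ consists of $k$ edges of $K_n$ sharing one common vertex.

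The enumeration is then immediate: choose the common vertex $v \in \llbracket 1,n \rrbracket$ in $n$ ways, and choose the $k$ other endpoints from the remaining $n-1$ vertices in $\binom{n-1}{k}$ ways. Since $k \geq 2$, this common vertex is uniquely recoverable from the clique (it is the intersection of all the pairs), so no overcount occurs, and the total is $n \binom{n-1}{k}$. The identity $n \binom{n-1}{k} = (k+1)\binom{n}{k+1}$ is routine factorial manipulation. I expect the only substantive step to be the structural lemma ruling out non-star $k$-cliques for $k \geq 4$; the rest is bookkeeping that parallels the triangle count already performed in this section.
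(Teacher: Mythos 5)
Your proof is correct and follows the same route as the paper's: every clique of size $k>3$ in $G_{12}(\id_n)$ is a set of $k$ edges of $K_n$ through a common vertex, counted as $n\binom{n-1}{k}$. In fact you supply the key justification --- that no fourth edge can be pairwise incident with a triangle, so non-star cliques cannot have size $\geq 4$ --- which the paper's one-line proof asserts without argument.
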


\begin{proof}
    The vertices $(a_1,b_1), (a_2,b_2), \dots, (a_k,b_k)$ in a clique of size
    $k > 3$ must have a common index, say $\ell = a_1 = a_2 = \dots = a_k$,
    without loss of generality. The remaining indices $b_1, b_2, \dots, b_k$ can
    chosen as any subset of the other $n-1$ indices. This explains the right
    hand side of the equation in the proposition.
\end{proof}

\section{Hereditary properties of graphs}
\label{sec:hereditary}

Intuitively one would think that if a pattern $p$ is contained
inside a larger pattern $q$, that either of the occurrence graphs
$G_p(\pi)$ and $G_q(\pi)$ (for any permutation $\pi$) would be
contained inside the other. But this is not the case as the
following examples demonstrate.

\begin{example}\begin{enumerate}
  \item Let $p=12$, $q=231$ and $\pi=3421$. The occurrence sets are
    $V_p(\pi) = \bigl\{ \{1,2\} \bigr\}$ and $V_q(\pi) = \bigl\{
    \{1,2,3\}, \{1,2,4\} \bigr\}$. The cardinality of the set
    $V_p(\pi)$ is smaller then the cardinality of $V_q(\pi)$.

  \item If on the other hand $p=12$, $q=123$ and $\pi=123$ then the
    occurrence sets are $V_p(\pi) = \bigl\{ \{1,2\}, \{1,3\}, \{2,3\}
    \bigr\}$ and $V_q(\pi) = \bigl\{ \{1,2,3\} \bigr\}$. The relative
    size of the occurrence sets are now changed.
\end{enumerate}
\end{example}

However, for a fixed pattern $p$, we obtain an inclusion of occurrence graph in
Proposition~\ref{prop:permutation-subgraphs}. First we need a lemma.

\begin{lemma}\label{lem:occurrence-subset}
  Let $\pi$ and $\sigma$ be two permutations. For every occurrence
  of $\pi$ in $\sigma$ the index injection induces an injection
  $\Phi_p\colon V_p(\pi) \rightarrow V_p(\sigma)$, for all patterns
  $p$.
\end{lemma}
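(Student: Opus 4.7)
The plan is to unwind the definitions and rely on the fact that standardisation is transitive: a subsequence of a sequence order-isomorphic to $p$ is order-isomorphic to $p$ precisely when the corresponding subsequence of the original is.

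First, I would fix an occurrence of $\pi$ in $\sigma$ with index injection $i\colon\llbracket 1,m\rrbracket\to\llbracket 1,n\rrbracket$, where $m$ and $n$ are the lengths of $\pi$ and $\sigma$ respectively. By Definition~\ref{defn:pattern-containment-and-occurrences} this means $\st\bigl(\sigma(i(1))\cdots\sigma(i(m))\bigr)=\pi$. Then I would define
\[
\Phi_p\bigl(\{j_1,\ldots,j_k\}\bigr) \;=\; \{i(j_1),\ldots,i(j_k)\}
\]
for any index set $\{j_1<\cdots<j_k\}\in V_p(\pi)$, where $p$ is a pattern of length $k$.

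Next, I would verify that the image indeed lies in $V_p(\sigma)$. Since $\{j_1,\ldots,j_k\}$ is an occurrence of $p$ in $\pi$, we have $\st\bigl(\pi(j_1)\cdots\pi(j_k)\bigr)=p$. On the other hand, the assumption $\st\bigl(\sigma(i(1))\cdots\sigma(i(m))\bigr)=\pi$ tells us that for all $a,b\in\llbracket 1,m\rrbracket$, $\pi(a)<\pi(b)$ if and only if $\sigma(i(a))<\sigma(i(b))$. In particular, the subsequence $\sigma(i(j_1))\cdots\sigma(i(j_k))$ is order-isomorphic to $\pi(j_1)\cdots\pi(j_k)$, and hence standardises to $p$. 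Because $i$ is order-preserving, the indices $i(j_1)<\cdots<i(j_k)$ are already in ordered form, so $\Phi_p\bigl(\{j_1,\ldots,j_k\}\bigr)$ is a legitimate occurrence of $p$ in $\sigma$.

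Finally, I would verify injectivity, which is essentially automatic: if $\Phi_p(J)=\Phi_p(J')$ then the two sets $i(J)$ and $i(J')$ coincide, and since $i$ is itself an injection, $J=J'$. The only step that requires any thought is the middle one, establishing that the composed standardisation still yields $p$, but this follows directly from the transitivity of order-isomorphism rather than from any nontrivial combinatorial argument; there is no real obstacle here.
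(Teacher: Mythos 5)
Your proof is correct and follows essentially the same route as the paper: compose the index injection of $p$ into $\pi$ with that of $\pi$ into $\sigma$ and check that the composite is again an index injection of $p$ into $\sigma$. You are slightly more explicit than the paper in spelling out the order-isomorphism (transitivity) step and in verifying injectivity of $\Phi_p$, both of which the paper leaves implicit.
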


\begin{proof}
  Let $p$, $\pi$, $\sigma$ be permutations of length $l$, $m$, $n$
  respectively. Every $v = \{ i_1, \ldots, i_l \} \in V_p(\pi)$ is
  an index set of $p$ in $\pi$ with index injection $i$. Let $j$ be
  an index injection for an index set $\{ j_1, \ldots, j_m \}$ of
  $\pi$ in $\sigma$. It's easy to see that $u = \{ j_{i_1}, \ldots,
  j_{i_l} \}$ is an index set of $p$ in $\sigma$ because $j \circ i$
  is an index injection of $p$ into $\sigma$. Define $\Phi_p(v) = u$.
\end{proof}

\begin{example}\label{ex:occurrence-subset}
  Let $p=12$, $\pi=132$ and $\sigma=24153$. There are three occurrences of $\pi$
  in $\sigma$: $243$, $253$ and $153$ with respective index injections $125$,
  $145$ and $345$.

  For a given index injection, say $i=345$, we obtain the injection $\Phi_p$ by
  mapping every $\{v_1, v_2\} \in V_p(\pi)$ to $\{i_{v_1}, i_{v_2} \} \in
  V_p(\sigma)$. We calculate that $\Phi_p$ maps $\{1,2\}$ to $\{i_1,i_2\} =
  \{3,4\}$ and $\{1,3\}$ to $\{i_1,i_3\} = \{3,5\}$, see
  Figure~\ref{fig:occurrence-subset-example}.

  \begin{figure}[htbp]\begin{gather*}
    \begin{tikzpicture}[scale=.4, baseline={([yshift=-3pt]current bounding box.center)}]
        \foreach \x in {1,...,5} {
          \foreach \y in {1,...,5} {
            \draw[gray] (0,\y) -- (6,\y);
            \draw[gray] (\x,0) -- (\x,6);
          }
        }
        \foreach \x in {(1,2),(2,4),(3,1),(4,5),(5,3)} {\fill[black] \x circle (5pt);}
        \foreach \x in {            (3,1),(4,5),(5,3)} {\draw[gray]  \x circle (8pt);}
        \node[draw, diamond, minimum size=9pt] at (3,1){};
        \node[draw, diamond, minimum size=9pt] at (5,3){};
    \end{tikzpicture}
  \end{gather*}
    \caption{The occurrence of $\pi$ in $\sigma$ that is defined by
      the index injection $i=345$ is highlighted with gray circles.
      The occurrence set $\left\{ 1,3 \right\}$ of $p$ in $\pi$ is
      mapped with the injection $\Phi_p$, induced by $i$, to the
      index set $\left\{ 3,5 \right\}$ of $p$ in $\sigma$,
      highlighted with black diamonds}
    \label{fig:occurrence-subset-example}
  \end{figure}
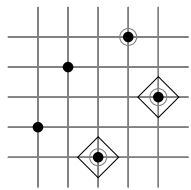
\end{example}

\begin{proposition}\label{prop:permutation-subgraphs}
  Let $p$ be a pattern and $\pi$, $\sigma$ be two permutations. For every
  occurrence of $\pi$ in $\sigma$ the index injection induces an isomorphism of
  the occurrence graph $G_p(\pi)$ with a subgraph of $G_p(\sigma)$.
\end{proposition}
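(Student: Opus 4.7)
The plan is to upgrade Lemma~\ref{lem:occurrence-subset} from an injection on vertex sets to a graph isomorphism onto a subgraph. Fix an occurrence of $\pi$ in $\sigma$ with index injection $j$, and let $\Phi_p \colon V_p(\pi) \to V_p(\sigma)$ be the induced map from the lemma. I need to verify two things: that $\Phi_p$ is injective as a map of vertex sets (so that the image is in bijection with $V(G_p(\pi))$), and that $\Phi_p$ preserves the edge relation in both directions.

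Injectivity of $\Phi_p$ is immediate from injectivity of $j$: if $u = \{u_1,\dots,u_l\}$ and $v = \{v_1,\dots,v_l\}$ are distinct elements of $V_p(\pi)$, then they differ in at least one index, and since $j$ sends distinct elements to distinct elements, the sets $\{j_{u_1},\dots,j_{u_l}\}$ and $\{j_{v_1},\dots,j_{v_l}\}$ remain distinct.

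The key step is the edge-preservation claim. For $u,v \in V_p(\pi)$, I would show the sharper statement
\[ |u \setminus v| = |\Phi_p(u) \setminus \Phi_p(v)|, \]
from which the biconditional $uv \in E(G_p(\pi)) \iff \Phi_p(u)\Phi_p(v) \in E(G_p(\sigma))$ follows by Definition~\ref{defn:occurrence-graph}. This is again a direct consequence of injectivity of $j$: restricting $j$ to $u \cup v$ gives a bijection between $u \cup v$ and $\Phi_p(u) \cup \Phi_p(v)$ which maps $u$ bijectively to $\Phi_p(u)$ and $v$ bijectively to $\Phi_p(v)$, so it restricts to a bijection $u \setminus v \to \Phi_p(u) \setminus \Phi_p(v)$.

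Combining these observations, $\Phi_p$ is a vertex-bijection between $G_p(\pi)$ and the induced subgraph $G_p(\sigma)[\Phi_p(V_p(\pi))]$ of $G_p(\sigma)$ that preserves adjacency in both directions, i.e., a graph isomorphism onto a subgraph of $G_p(\sigma)$. There is no real obstacle; the whole argument rests on the order-preserving injection $j$ behaving well with respect to symmetric differences, and the bulk of the work was already done in Lemma~\ref{lem:occurrence-subset}.
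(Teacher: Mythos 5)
Your proposal is correct and follows essentially the same route as the paper: both rely on the injection $\Phi_p$ from Lemma~\ref{lem:occurrence-subset} and on the injectivity of the index injection $j$ to see that adjacency is preserved. The only difference is that you also establish the converse implication via the identity $\left|u \setminus v\right| = \left|\Phi_p(u) \setminus \Phi_p(v)\right|$, so you actually identify $G_p(\pi)$ with an \emph{induced} subgraph of $G_p(\sigma)$ --- a slight strengthening that the paper does not need, since its notion of subgraph is not required to be induced.
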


\begin{proof}
  From Lemma~\ref{lem:occurrence-subset} we have the injection $\Phi_p \colon
  V_p(\pi) \rightarrow V_p(\sigma)$. We need to show that for every $uv \in
  E\left(G_p(\pi)\right)$ that $\Phi_p(u)\Phi_p(v) \in
  E\left(G_p(\sigma)\right)$. Let $uv$ be an edge in $G_p(\pi)$, where $u =
  \left\{ u_1, \ldots, u_l \right\}$ and $v = \left\{ v_1, \ldots, v_l
  \right\}$. For every index injection $j$ of $\pi$ into $\sigma$, the vertices
  $u,v$ map to $\Phi_p(u) = \left\{ j(u_1), \ldots, j(u_l) \right\}$, $\Phi_p(v) =
  \left\{ j(v_1), \ldots, j(v_l) \right\}$ respectively. Since $j$ is an
  injection there exists an edge between these two vertices in $G_p(\sigma)$.
  \end{proof}

\begin{example}\label{ex:permutation-subset}
  We will continue with Example~\ref{ex:occurrence-subset} and show
  how the index injection $i=345$ define subgraph of
  $G_p(\sigma)$ which is isomorphic to $G_p(\pi)$. The occurrence
  graph of $p$ in $\pi$ is a graph on two vertices $\{1,2\}$ and
  $\{1,3\}$ with an edge between them. The occurrence graph
  $G_p(\sigma)$ with the highlighted subgraph induced by $i$ is shown in
  Figure~\ref{fig:permutation-subset-example}.

  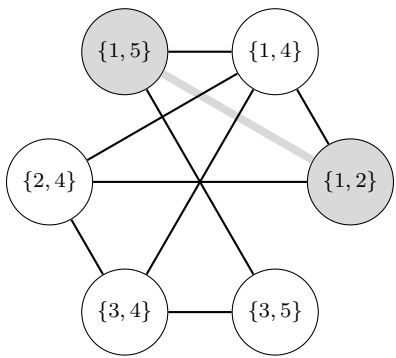
\begin{figure}[htbp]
    \begin{tikzpicture}
    \tikzstyle{vertex}=[draw,circle]
    \tikzstyle{gertex}=[draw,circle,fill=lightgray]
    \tikzstyle{edge} = [draw,thick,-,black]
    \tikzstyle{wedge}= [line width=3pt,opacity=1.0,lightgray]
    \def \radius {2cm}
    \node[gertex] (v12) at ({  0}:\radius) {\tiny{$\{1,2\}$}};
    \node[vertex] (v14) at ({ 60}:\radius) {\tiny{$\{1,4\}$}};
    \node[gertex] (v15) at ({120}:\radius) {\tiny{$\{1,5\}$}};
    \node[vertex] (v24) at ({180}:\radius) {\tiny{$\{2,4\}$}};
    \node[vertex] (v34) at ({240}:\radius) {\tiny{$\{3,4\}$}};
    \node[vertex] (v35) at ({300}:\radius) {\tiny{$\{3,5\}$}};
    \draw[edge] (v12) -- (v14) -- (v15);
    \draw[wedge] (v15) -- (v12);
    \draw[edge] (v12) -- (v24) -- (v14) -- (v34) -- (v24);
    \draw[edge] (v15) -- (v35) -- (v34);
    \end{tikzpicture}
  \caption{The graph $G_{12}(24153)$ with a highlighted subgraph
  isomorphic to $G_{12}(132)$}
  \label{fig:permutation-subset-example}
  \end{figure}
\end{example}

The next example shows that different occurrences of $\pi$ in
$\sigma$ do not necessarily lead to different subgraphs of
$G_p(\sigma)$.

\begin{example}
    If $p=12$, $\pi=312$ and $\sigma=3412$ there are two occurrences of
    $\pi$ in $\sigma$. The index injections are $i=134$ and $i'=234$.
    However, as $(i_2,i_3) = (i'_2,i'_3)$ and the fact that $\{2,3\}$ is
    the only index set of $p$ in $\pi$, we obtain the same
    injection $\Phi_p$ and therefore the same subgraph of $G_p(\sigma)$
    for both index injections.
\end{example}

We call a property of a graph $G$ \emph{hereditary} if it is invariant under
isomorphisms and for every subgraph of $G$ the property also holds. For example
the properties of being a forest, bipartite, planar or $k$-colorable are
hereditary properties, while being a tree is not hereditary. A set of graphs
defined by a hereditary property is a \emph{hereditary class}.

Given $c$, a property of graphs, we define a set of permutations:
\[
    \ms{G}_{p,c} = \left\{ \pi \in \mf{S} \colon G_p(\pi) \text{ has property } c \right\}.
\]

\begin{theorem}\label{thm:classical-subsets}
    Let $c$ be a hereditary property of graphs. For any pattern $p$ the set
    $\ms{G}_{p,c}$ is a permutation class, i.e.,~there is a set of classical
    permutation patterns $M$ such that
    \[
      \ms{G}_{p,c} = \Av(M).
    \]
\end{theorem}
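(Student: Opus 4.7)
The plan is to reduce the theorem to the standard characterization of permutation classes: a set $C \subseteq \mf{S}$ equals $\Av(M)$ for some $M$ if and only if $C$ is \emph{downward closed} under pattern containment, i.e., whenever $\sigma \in C$ and $\pi$ is contained in $\sigma$, we have $\pi \in C$. Given this, one can always take $M$ to be the set of minimal permutations (with respect to pattern containment) in the complement $\mf{S} \setminus C$. So the real content of the theorem reduces to verifying that $\ms{G}_{p,c}$ is downward closed.

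To show downward closure, I would fix $\sigma \in \ms{G}_{p,c}$ and a pattern $\pi$ contained in $\sigma$, and argue that $\pi \in \ms{G}_{p,c}$. By hypothesis $G_p(\sigma)$ has property $c$. Since $\pi$ occurs in $\sigma$, Proposition~\ref{prop:permutation-subgraphs} applies: the chosen occurrence's index injection induces an isomorphism between $G_p(\pi)$ and some subgraph $H$ of $G_p(\sigma)$. Because $c$ is hereditary, $c$ holds for every subgraph of $G_p(\sigma)$, and in particular for $H$. Because $c$ is invariant under isomorphism (part of being hereditary), $c$ also holds for $G_p(\pi)$. Hence $\pi \in \ms{G}_{p,c}$, which is exactly what we needed.

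Finally, to produce the forbidden set $M$ explicitly, I would set
\[
  M = \bigl\{ \tau \in \mf{S} \setminus \ms{G}_{p,c} \colon \text{every proper pattern of } \tau \text{ lies in } \ms{G}_{p,c} \bigr\},
\]
i.e., the antichain of minimal obstructions. The inclusion $\ms{G}_{p,c} \subseteq \Av(M)$ is immediate from downward closure (a permutation in $\ms{G}_{p,c}$ cannot contain any forbidden pattern, since forbidden patterns are themselves not in $\ms{G}_{p,c}$). For the reverse inclusion, given $\pi \notin \ms{G}_{p,c}$, one descends through its subpatterns until arriving at a minimal non-member, which is then an element of $M$ contained in $\pi$, so $\pi \notin \Av(M)$.

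There is no substantive obstacle here: Proposition~\ref{prop:permutation-subgraphs} does all the heavy lifting, and the rest is a straightforward unpacking of what \emph{hereditary} and \emph{permutation class} mean. The only thing one should not slur over is the distinction between ``$G_p(\pi)$ is a subgraph'' (false in general as sets of indices) and ``$G_p(\pi)$ is isomorphic to a subgraph of $G_p(\sigma)$'' (which is what Proposition~\ref{prop:permutation-subgraphs} actually provides), and this is precisely the place where invariance of $c$ under isomorphism is used.
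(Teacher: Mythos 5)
Your proof is correct and follows essentially the same route as the paper: both establish that $\ms{G}_{p,c}$ is downward closed under pattern containment by combining Proposition~\ref{prop:permutation-subgraphs} with the hereditary property of $c$, and then invoke the standard equivalence between downward-closed sets and avoidance classes. Your write-up is simply more explicit than the paper's (which leaves the construction of $M$ as the antichain of minimal obstructions entirely implicit), and your closing remark about isomorphism invariance correctly identifies the one point where care is needed.
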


\begin{proof}
  Let $\sigma$ be a permutation such that $G_p(\sigma)$ satisfies
  the hereditary property $c$ and let $\pi$ be a pattern in $\sigma$.
  By Lemma~\ref{prop:permutation-subgraphs} the graph $G_p(\pi)$
  is isomorphic to a subgraph of $G_p(\sigma)$ and thus inherits the
  property $c$.
\end{proof}

In the remainder of this section we consider two hereditary classes of graphs:
bipartite graphs and forests. Recall that a non-empty simple graph on $n$
vertices ($n>0$) is a \emph{tree} if and only if it is connected and has $n-1$
edges. An equivalent condition is that the graph has at least one vertex and no
simple cycles (a simple cycle is a sequence of unique vertices $v_1, \ldots,
v_k$ with edges $v_1v_2, \ldots, v_{k-1}v_k, v_kv_1$). A \emph{forest} is a
disjoint union of trees. The empty graph is a forest but not a tree.
\emph{Bipartite} graphs are graphs that can be colored with two colors in such a
way that no edge joins two vertices with the same color. We note that every
forest is a bipartite graph.\\

Table~\ref{tab:bipartite} shows experimental results, obtained
using~\cite{bisc}, on which occurrence graphs with respect to the patterns $p =
12$, $p = 123$, $p =132$ are bipartite.

\begin{center}
\begin{table}[h]
  \caption{Experimental results for bipartite occurrence graphs. Computed with permutations up to length $8$}
  \label{tab:bipartite}
  \begin{tabular}{| l | l | l | l |}
      \hline
      $p$   & basis                              & Number seq.\                         & OEIS \\
      \hline
      $12$  & $123$, $1432$, $3214$              & $1, 2, 5, 12, 26, 58, 131, 295$      & A116716 \\
      \hline
      $123$ & $1234$, $12543$, $14325$, $32145$  & $1, 2, 6, 23, 100, 462, 2207, 10758$ & \\
      \hline
      $132$ & $1432$, $12354$, $13254$, $13452$, & $1, 2, 6, 23, 95, 394, 1679, 7358$   & \\
            & $15234$, $21354$, $23154$,         &                                      & \\
            & $31254$, $32154$                   &                                      & \\
      \hline
  \end{tabular}
\end{table}
\end{center}

In the following theorem we verify the statements in line $1$ of
Table~\ref{tab:bipartite}. We leave the remainder of the table as conjectures.

\begin{theorem}\label{thm:bipartite}
  Let $c$ be the property of being bipartite and $p=12$. Then
  \[
    \ms{G}_{p,c} = \Av\left( 123, 1432, 3214 \right).
  \]
  The OEIS sequence A116716 enumerates a symmetry of this permutation class.
\end{theorem}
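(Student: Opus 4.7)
The plan is to prove the two inclusions separately; the easy direction is containment-to-triangle and the hard direction is structural.

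For $\ms{G}_{p,c}\subseteq \Av(123,1432,3214)$, I would check that each of the three basis patterns has an occurrence graph containing a triangle. For $p=123$ the ascents $\{1,2\},\{1,3\},\{2,3\}$ are pairwise adjacent. For $p=1432$ the ascents $\{1,2\},\{1,3\},\{1,4\}$ all share the index $1$, and for $p=3214$ the ascents $\{1,4\},\{2,4\},\{3,4\}$ all share the index $4$, both yielding triangles. By Proposition~\ref{prop:permutation-subgraphs}, whenever $\pi$ contains one of these patterns the corresponding triangle embeds into $G_{12}(\pi)$, so $G_{12}(\pi)$ is not bipartite.

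For the reverse inclusion, assume $\pi\in\Av(123,1432,3214)$ and exhibit a proper $2$-coloring of $G_{12}(\pi)$. The key structural observation is that $123$-avoidance forces every ascent $\{i,j\}$ with $i<j$ and $\pi(i)<\pi(j)$ to have $\pi(i)$ a left-to-right minimum and $\pi(j)$ a right-to-left maximum; any witness $k$ to the contrary completes a $123$ pattern. Furthermore, the smaller endpoint of an ascent cannot itself be a right-to-left maximum and the larger endpoint cannot itself be a left-to-right minimum, since these conditions contradict $\pi(i)<\pi(j)$. Let $L$ be the positions that are left-to-right minima but not right-to-left maxima and $R$ the positions that are right-to-left maxima but not left-to-right minima; then $L\cap R=\emptyset$ and the vertex set of $G_{12}(\pi)$ is a subset of $L\times R$.

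The two remaining patterns become row and column caps in this rook-graph model. Since right-to-left maxima decrease from left to right, three ascents $(l,r_1),(l,r_2),(l,r_3)$ sharing $l\in L$ with $r_1<r_2<r_3$ satisfy $\pi(l)<\pi(r_3)<\pi(r_2)<\pi(r_1)$, a $1432$ pattern. Dually, left-to-right minima decrease from left to right, so three ascents $(l_1,r),(l_2,r),(l_3,r)$ with $l_1<l_2<l_3$ give $\pi(l_3)<\pi(l_2)<\pi(l_1)<\pi(r)$, a $3214$. Hence each row and each column of $L\times R$ contains at most two vertices of $G_{12}(\pi)$. The edges of $G_{12}(\pi)$ accordingly split into a row-matching and a column-matching; the union of two matchings has maximum degree $2$ and is a disjoint union of paths and even cycles, and in particular bipartite.

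The main obstacle I anticipate is pinpointing this dictionary: that $123$-avoidance funnels every vertex into a pair (LR-min, RL-max), and that $1432$ and $3214$ are exactly the obstructions to ``at most two cells per row'' and ``at most two cells per column'' in that model. Once the dictionary is in place, $2$-colorability of a union of two matchings is immediate. The OEIS remark follows from the observation that A116716 enumerates the image of $\Av(123,1432,3214)$ under one of the standard dihedral symmetries of $\mf{S}$.
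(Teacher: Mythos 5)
Your forward inclusion is exactly the paper's: each basis pattern has a triangle in its occurrence graph, and Proposition~\ref{prop:permutation-subgraphs} transports it into $G_{12}(\pi)$. Your reverse inclusion, however, is a genuinely different and in fact more self-contained argument. The paper proceeds contrapositively: a non-bipartite $G_{12}(\pi)$ has an odd cycle, Proposition~\ref{prop:length-of-cycle-with-p-12} (any cycle of length $k>4$ forces a $3$-cycle) reduces this to a triangle, and the indices of that triangle are then shown to span one of $123$, $1432$, $3214$; both Proposition~\ref{prop:length-of-cycle-with-p-12} and the final classification step are argued somewhat informally there. You instead exhibit a proper $2$-coloring directly: $123$-avoidance places every occurrence of $12$ in $L\times R$ with $L$ (left-to-right minima that are not right-to-left maxima) and $R$ (the dual set) disjoint, $1432$- and $3214$-avoidance cap each row and column at two vertices, and since $L\cap R=\emptyset$ every edge is a row edge or a column edge, so $E(G_{12}(\pi))$ is a union of two matchings and hence a disjoint union of paths and even cycles. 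I checked the key steps --- the witness $k$ argument for the LR-min/RL-max dictionary, the monotonicity of values along $L$ and along $R$, and the fact that no edge can be ``crossed'' because $L\cap R=\emptyset$ --- and they all hold. What your route buys is a rigorous positive proof of bipartiteness that bypasses Proposition~\ref{prop:length-of-cycle-with-p-12} entirely; what it gives up is the cycle-structure information that the paper reuses for Theorem~\ref{thm:forest} (the forest case needs to know that all cycles have length $3$ or $4$), so your argument would not directly substitute there.
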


The proof of this theorem relies on a proposition characterizing the cycles
in the graphs under consideration.

\begin{proposition}\label{prop:length-of-cycle-with-p-12}
  If the graph $G_{12}(\pi)$ has a cycle of length $k>4$ then it also has a
  cycle of length $3$.
\end{proposition}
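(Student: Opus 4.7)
My plan is to rewrite $G_{12}(\pi)$ as a line graph and thereby reduce the problem to a question about induced cycles in a simpler graph. Define $H_\pi$ to be the graph on vertex set $\{1,\dots,n\}$ whose edges are exactly the index sets $\{i,j\}$ of occurrences of the pattern $12$ in $\pi$, i.e.\ the pairs with $i<j$ and $\pi(i)<\pi(j)$. Then the vertices of $G_{12}(\pi)$ are precisely the edges of $H_\pi$, and two of them are adjacent in $G_{12}(\pi)$ exactly when they share a vertex of $H_\pi$, so $G_{12}(\pi) = L(H_\pi)$.

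I would then argue by contrapositive: assume $G_{12}(\pi)$ has a cycle of length $k>4$ and \emph{no} triangle. The standard fact that $L(H)$ is triangle-free if and only if $H$ is triangle-free and has maximum degree at most $2$ (any vertex of $H$ of degree $\geq 3$, or any triangle in $H$, produces a triangle in $L(H)$) forces $H_\pi$ to be a disjoint union of paths and cycles of length at least $4$. A cycle of length $k$ in $L(H_\pi)$ corresponds to a closed trail of $k$ distinct edges in $H_\pi$; because each vertex of $H_\pi$ has degree at most $2$, no vertex of such a trail can be used twice, so the trail traces out an actual $k$-cycle $w_1,w_2,\dots,w_k$ in $H_\pi$, and it is automatically chordless since its vertices are already of full degree.

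The core step is ruling out an induced $k$-cycle in $H_\pi$ for $k\geq 5$. After cyclically relabeling the cycle, assume $w_1$ has the smallest index among $w_1,\dots,w_k$. The cycle edges $\{w_1,w_2\}$ and $\{w_1,w_k\}$ give $\pi(w_1)<\pi(w_2)$ and $\pi(w_1)<\pi(w_k)$, while the non-edges $\{w_1,w_j\}$ for $j=3,\dots,k-1$ give $\pi(w_j)<\pi(w_1)$. Interpreting the remaining edges and non-edges as inequalities on indices: the edges $\{w_2,w_3\}$ and $\{w_{k-1},w_k\}$ force $w_3<w_2$ and $w_{k-1}<w_k$ (in an ascending pair the smaller $\pi$-value sits at the smaller index), while the non-edges $\{w_2,w_{k-1}\}$ and $\{w_3,w_k\}$ force $w_2<w_{k-1}$ and $w_k<w_3$ (for a non-edge the larger $\pi$-value sits at the smaller index). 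Chaining yields $w_3<w_2<w_{k-1}<w_k<w_3$, a contradiction.

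The main subtlety is that the two ``long'' non-edges $\{w_2,w_{k-1}\}$ and $\{w_3,w_k\}$ must genuinely be non-adjacent in the cycle; this is exactly the place where the hypothesis $k\geq 5$ is used. When $k=4$ one of these pairs becomes a cycle edge and the argument collapses, consistent with the fact that $G_{12}(\pi)$ really can contain a chordless $4$-cycle (for example, $G_{12}(2143)$ is itself a $4$-cycle).
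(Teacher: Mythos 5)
Your proof is correct, and it takes a genuinely different route from the paper's. You factor the problem through the observation that $G_{12}(\pi)$ is the line graph $L(H_\pi)$ of the non-inversion graph $H_\pi$ of $\pi$ (a permutation graph), use the standard characterization of triangle-free line graphs to convert a triangle-free cycle of length $k>4$ in $G_{12}(\pi)$ into an induced $k$-cycle in $H_\pi$, and then rule out induced cycles of length at least five in $H_\pi$ by the chain $w_3<w_2<w_{k-1}<w_k<w_3$. Every step checks out: the identification $G_{12}(\pi)=L(H_\pi)$ is immediate from the definitions; the lifting of the cycle to a closed trail that must be a chordless cycle when the maximum degree is two is correctly justified; each inequality is forced by the right edge/non-edge dichotomy (ascent pairs align index order with value order, inversions reverse it); and the hypothesis $k\ge 5$ is used exactly where you say it is, to guarantee that $\{w_2,w_{k-1}\}$ and $\{w_3,w_k\}$ are non-edges. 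The paper instead argues directly inside $G_{12}(\pi)$: it walks along the cycle $v_1,v_2,\dots$, tracks which index consecutive occurrences share, and shows pictorially that the resulting staircase of line segments cannot close into a cycle of length greater than four without creating a triangle, leaving the final case analysis to the reader. Your approach buys a complete, self-contained argument that in effect rederives the fact that permutation graphs contain no induced $C_k$ for $k\ge 5$, and it makes transparent why $k=4$ (e.g.\ $\pi=2143$) is the genuine exception; the paper's approach stays closer to the permutation picture but is less rigorous as written.
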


\begin{proof}
  Let $p=12$ and let $\pi$ be a permutation such that $G_p(\pi)$
  contains a cycle of length $k>4$. Label the vertices in the
  cycle $v_1, \ldots, v_k$ with $v_l = \{ i_l, j_l \}$, $i_l < j_l$,
  for $l=1,\ldots,k$.

  The vertices $v_1$ and $v_2$ in the cycle have exactly one index
  in common. If $i_2 = j_1$ then the vertices $v_1$, $v_2$, $\{i_1, j_2\}$ form
  a triangle. So we can assume $i_1 = i_2$. If $j_1 < j_2$ and $\pi(j_1) <
  \pi(j_2)$ (or $j_1 > j_2$ and $\pi(j_1) > \pi(j_2)$) then $u = \{
  j_1, j_2 \}$ is an occurrence of $p$ in $\pi$, forming a triangle
  $v_1,v_2,u$. So either $j_1 > j_2$ and $\pi(j_1) <
  \pi(j_2)$ holds, or, without loss of generality (see
  Figure~\ref{fig:length-of-cycle_v1v2}), $j_1 < j_2$ and $\pi(j_1) > \pi(j_2)$.

  \begin{figure}[htbp]\begin{gather*}
    \begin{tikzpicture}[scale=.4, baseline={([yshift=-3pt]current bounding box.center)}]
        \foreach \x in {2,5,8} {
          \foreach \y in {2,5,8} {
            \draw[gray] (0,\y) -- (10,\y);
            \draw[gray] (\x,0) -- (\x,10);
          }
        }
        \foreach \x in {(2,2),(5,8),(8,5)} {\fill[black] \x circle (5pt);}
        \draw[black] (2,2) -- (5,8) node [midway, fill=white] {$v_1$};
        \draw[black] (2,2) -- (8,5) node [midway, fill=white] {$v_2$};
        \node[below] at  (0.5,-0.2) {$\cdots$};
        \node[below] at  (2,  0) {$i_1$};
        \node[below] at  (3.5,-0.2) {$\cdots$};
        \node[below] at  (5,  0) {$j_1$};
        \node[below] at  (6.5,-0.2) {$\cdots$};
        \node[below] at  (8,  0) {$j_2$};
        \node[below] at  (9.5,-0.2) {$\cdots$};
        \node[right] at (11.2,9.5) {$\vdots$};
        \node[right] at (10,  8) {$\pi(j_1)$};
        \node[right] at (11.2,6.5) {$\vdots$};
        \node[right] at (10,  5) {$\pi(j_2)$};
        \node[right] at (11.2,3.5) {$\vdots$};
        \node[right] at (10,  2) {$\pi(i_1)$};
        \node[right] at (11.2,  1) {$\vdots$};
    \end{tikzpicture}\end{gather*} \caption{The vertices $v_1$ and
      $v_2$ (shown as line segments inside the permutation $\pi$)
      share the index $i_1$}
    \label{fig:length-of-cycle_v1v2}
  \end{figure}
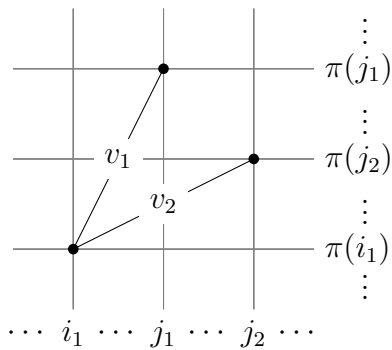

  Next we look at the edge $v_2v_3$ in the cycle. If the vertices
  have the index $i_1$ in common then $v_1,v_2,v_3$ forms a triangle
  in $G_p(\pi)$. So assume that $v_2$ and $v_3$ have the index $j_2$
  in common with the conditions $i_3 > i_1$ and $\pi(i_3) < \pi(i_1)$
  (because else there are more vertices and edges forming a cycle
  of length $3$ in $G_p(\pi)$). Continuing down this road we
  know that $v_3v_4$ is an edge with shared index $i_3$ and
  conditions $j_3 > i_3$ and $\pi(j_3) < \pi(j_1)$, see
  Figure~\ref{fig:length-of-cycle_v1v2v3v4}, where we consider the case $i_3>j_1$,
  and $\pi(j_3) < \pi(i_1)$.

  \begin{figure}[htbp]\begin{gather*}
    \begin{tikzpicture}[scale=.4, baseline={([yshift=-3pt]current bounding box.center)}]
        \foreach \x in {2,5,8,11,14} {
          \foreach \y in {2,5,8,11,14} {
            \draw[gray] (0,\y) -- (16,\y);
            \draw[gray] (\x,0) -- (\x,16);
          }
        }
        \foreach \x in {(2,8),(5,14),(11,11),(8,2),(14,5)} {\fill[black] \x circle (5pt);}
        \draw[black] ( 2, 8) -- ( 5,14) node [midway, fill=white] {$v_1$};
        \draw[black] ( 2, 8) -- (11,11) node [midway, fill=white] {$v_2$};
        \draw[black] ( 8, 2) -- (11,11) node [midway, fill=white] {$v_3$};
        \draw[black] ( 8, 2) -- (14, 5) node [midway, fill=white] {$v_4$};
        \node[below] at  (0.5,0) {$\cdots$};
        \node[below] at  (2  ,0) {$i_1$};
        \node[below] at  (3.5,0) {$\cdots$};
        \node[below] at  (5  ,0) {$j_1$};
        \node[below] at  (6.5,0) {$\cdots$};
        \node[below] at  (8  ,0) {$i_3$};
        \node[below] at  (9.5,0) {$\cdots$};
        \node[below] at (11  ,0) {$j_2$};
        \node[below] at (12.5,0) {$\cdots$};
        \node[below] at (14  ,0) {$j_3$};
        \node[below] at (15.5,0) {$\cdots$};
        \node[right] at (16,15.5) {$\vdots$};
        \node[right] at (16,  14) {$\pi(j_1)$};
        \node[right] at (16,12.5) {$\vdots$};
        \node[right] at (16,  11) {$\pi(j_2)$};
        \node[right] at (16, 9.5) {$\vdots$};
        \node[right] at (16,   8) {$\pi(i_1)$};
        \node[right] at (16, 6.5) {$\vdots$};
        \node[right] at (16,   5) {$\pi(j_3)$};
        \node[right] at (16, 3.5) {$\vdots$};
        \node[right] at (16,   2) {$\pi(i_3)$};
        \node[right] at (16,   1) {$\vdots$};
    \end{tikzpicture}\end{gather*}
      \caption{The vertices $v_1, v_2, v_3, v_4$}
    \label{fig:length-of-cycle_v1v2v3v4}
  \end{figure}
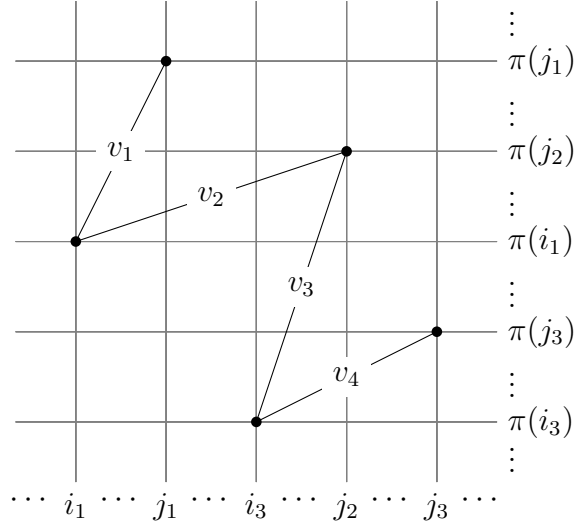

  Graphically, it is quite obvious that we cannot extend the graphical line path
  in Figure~\ref{fig:length-of-cycle_v1v2v3v4} with more southwest-northeast
  line segments (a sequence of vertices $v_5, \ldots, v_k$) such that the
  extension closes the path into a cycle without adding more edges (line
  segments) between vertices that are not adjacent in the cycle and thus forming
  a cycle of length $3$ in the occurrence graph. More precisely for an
  edge between $v_k$ and $v_1$ to exist we must have $v_k = \{i_k, j_k\}$ with
  a non-empty intersection with $v_1$. Analyzing each of these cases completes
  the proof.
\end{proof}

\begin{proof}[Proof of Theorem~\ref{thm:bipartite}]
  If $\pi$ contains any of the patterns $123$, $1432$, $3214$ then $G_p(\pi)$
  contains a subgraph that is isomorphic to a triangle. So if
  $\pi \notin \Av\left(123, 1432, 3214 \right)$ then $G_p(\pi)$ contains an odd
  cycle and is therefore not bipartite.

  On the other hand, let $\pi$ be a permutation such that $G_p(\pi)$ is not
  bipartite. Then the occurrence graph contains an odd cycle which by
  Proposition~\ref{prop:length-of-cycle-with-p-12} implies the graph has a
  cycle of length $3$. The indices corresponding to this cycle form a pattern of
  length $3$ or $4$ in $\pi$ with occurrence graph that is a cycle of lengths
  $3$. It is easy to see that the only permutations of these length whith
  occurrence graph a cycle of length $3$ are $123$, $1432$ and $3214$. Therefore
  $\pi$ must contain at least one of the patterns. \end{proof}

Table~\ref{tab:forest} considers occurrence graphs that are forests.

\begin{center}
\begin{table}[h]
    \caption{Experimental results for occurrence graphs that are forests. Computed with permutations up to length $8$}
    \label{tab:forest}
  \begin{tabular}{| l | l | l | l |}
      \hline
      $p$   & basis                               & Number seq.\                       & OEIS \\
      \hline
      $12$  & $123$, $1432$, $2143$, $3214$       & $1, 2, 5, 11, 24, 53, 117, 258$    & A052980 \\
      \hline
      $123$ & $1234$, $12543$, $13254$, $14325$,  & $1, 2, 6, 23, 97, 429, 1947, 8959$ & \\
            & $21354$, $21435$, $32145$           &                                    & \\
      \hline
      $132$ & $1432$, $12354$, $12453$, $12534$,  & $1, 2, 6, 23, 90, 359, 1481, 6260$ & \\
            & $13254$, $13452$, $14523$, $15234$, &                                    & \\
            & $21354$, $21453$, $21534$, $23154$, &                                    & \\
            & $31254$, $32154$                    &                                    & \\
      \hline
  \end{tabular}
\end{table}
\end{center}

In the following theorem we verify the statements in line $1$ of
Table~\ref{tab:forest}. We leave the remainder of the table as conjectures.

\begin{theorem}\label{thm:forest}
  Let $c$ be the property of being a forest and $p=12$. Then \[
  \ms{G}_{p,c} = \Av\left( 123, 1432, 2143, 3214 \right) .\]
\end{theorem}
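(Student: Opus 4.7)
The plan is to mirror the structure of the proof of Theorem~\ref{thm:bipartite}. For the ``only if'' direction I would first verify by direct calculation that $G_{12}(123)$, $G_{12}(1432)$, and $G_{12}(3214)$ are each triangles, while $G_{12}(2143)$ is a $4$-cycle on the vertex set $\bigl\{\{1,3\},\{1,4\},\{2,3\},\{2,4\}\bigr\}$; none of these graphs is a forest, so by Proposition~\ref{prop:permutation-subgraphs} any $\pi$ containing one of the four patterns has an occurrence graph containing an isomorphic copy of a cycle, and therefore $G_{12}(\pi)$ is not a forest.

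For the ``if'' direction, suppose $G_{12}(\pi)$ is not a forest, so it contains a cycle. By Proposition~\ref{prop:length-of-cycle-with-p-12}, any cycle of length greater than $4$ implies a triangle, so we may assume $G_{12}(\pi)$ contains a cycle of length exactly $3$ or exactly $4$. The $3$-cycle case is essentially already handled inside the proof of Theorem~\ref{thm:bipartite}: a triangle in $G_{12}(\pi)$ is either a $3$-index triangle $\{i,j\},\{j,k\},\{i,k\}$ with $i<j<k$, which yields a $123$ occurrence directly, or a star triangle $\{\ell,m_1\},\{\ell,m_2\},\{\ell,m_3\}$ sharing a common index $\ell$; a short case analysis on the position of $\ell$ relative to $m_1,m_2,m_3$ shows the induced pattern of length $4$ either contains $123$ or standardises to $1432$ or $3214$.

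The genuinely new case is a $4$-cycle. If the $4$-cycle has a chord, the chord creates a triangle and we reduce to the previous case, so I may assume the cycle $v_1 v_2 v_3 v_4$ is chord-free. Then $|v_i \cap v_{i+1}|=1$ while $|v_1 \cap v_3|=|v_2 \cap v_4|=0$, and a short structural argument forces $v_1,\ldots,v_4$ to involve exactly four distinct indices $a,b,c,d$ with $v_1=\{a,b\}$, $v_2=\{a,c\}$, $v_3=\{c,d\}$, $v_4=\{b,d\}$. The absence of the diagonals $\{a,d\}$ and $\{b,c\}$ from $V_{12}(\pi)$ says these index pairs are descents of $\pi$. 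Enumerating orderings of $a,b,c,d$ (using the structural symmetry $a \leftrightarrow d$, $b \leftrightarrow c$ to reduce to $a<d$ and $b<c$), most orderings produce a contradiction between the required ascents and descents, and the consistent orderings all yield the standardised pattern $2143$ on these four indices.

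The main obstacle is this $4$-cycle case analysis: it requires checking that among the orderings of the four underlying indices, the ascent/descent conditions imposed by the chord-free square structure are only simultaneously satisfiable when the four indices form a $2143$ pattern. This is the only genuinely new ingredient beyond the ideas used in the proof of Theorem~\ref{thm:bipartite}.
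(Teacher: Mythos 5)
Your proposal is correct and follows essentially the same route as the paper's proof: the forward direction via Proposition~\ref{prop:permutation-subgraphs} applied to the four basis patterns, and the reverse direction via Proposition~\ref{prop:length-of-cycle-with-p-12} followed by identifying which length-$3$ and length-$4$ subpatterns can carry a $3$- or $4$-cycle (a case analysis the paper compresses into ``it's easy to see''). One small point to tighten: excluding the diagonals $\{a,d\}$ and $\{b,c\}$ from $V_{12}(\pi)$ does not follow from chord-freeness of the $4$-cycle (a diagonal occurrence is a fifth vertex, not a chord), but rather from the observation that such an occurrence would form a triangle with the two cycle vertices sharing its common index, which is exactly the reduction to the triangle case you already invoke.
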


\begin{proof}
  If $\pi$ contains the pattern $2143$ then $G_p(\pi)$ contains a
  subgraph that is isomorphic to a cycle of length four, according to
  Lemma~\ref{prop:permutation-subgraphs}, because $G_p(2143)$ is a
  cycle of length four. If $\pi$ contains any of the patterns $123$,
  $1432$, $3214$ then its occurrence graph is bipartite by
  Theorem~\ref{thm:bipartite}, and in particular not a forest.

  On the other hand, let $\pi$ be a permutation such that $G_p(\pi)$
  is not a forest. Then the occurrence graph contains a cycle.
  Proposition~\ref{prop:length-of-cycle-with-p-12} implies that the graph must
  have length either $3$ or $4$. But it's easy
  to see that the only permutations with occurrence graphs that are
  cycles of length $3$ or $4$ are $123,1432,2143,3214$. Therefore
  $\pi$ must contain at least one of the patterns.
\end{proof}

\section{Non-hereditary properties of graphs}
\label{sec:non-hereditary}

This section is devoted to graph properties that are not hereditary. Thus
Theorem~\ref{thm:classical-subsets} does not guarantee that the permutations
whose occurrence graphs satisfy the property form a pattern class.
Experimental results in Table~\ref{tab:non-hereditary} seem to suggest that some
properties still give rise to permutation classes.

\begin{center}
\begin{table}[h]
  \caption{$p = 12$. Computed with permutations up to length $8$}
  \label{tab:non-hereditary}
  \begin{tabular}{| l | l | l | l |}
      \hline
      Property  & basis                           & Number seq.\                         & OEIS \\
      \hline
      connected & see Figure~\ref{fig:meshp}      & $1, 2, 6, 23, 111, 660, 4656, 37745$ & \\
      \hline
      chordal   & $1234$, $1243$, $1324$, $2134$, & $1, 2, 6, 19, 61, 196, 630, 2025$    & \\
                & $2143$                          &                                      & \\
      \hline
      clique    & $1234$, $1243$, $1324$, $1342$, & $1, 2, 6, 12, 20, 30, 42, 56$        & A002378\footnote{From from $n=2$.} \\
                & $1423$, $2134$, $2143$, $2314$, &                                      & \\
                & $2413$, $3124$, $3142$, $3412$  &                                      & \\
      \hline
      tree      & very large non-classical basis    & $0, 1, 4, 9, 16, 25, 36, 49$         & A000290 \\
      \hline
  \end{tabular}
\end{table}
\end{center}

\begin{figure}[htbp]
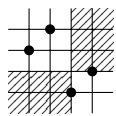

  \mpattern{scale=\pattdispscale}{ 4 }{ 1/3, 2/4, 3/1, 4/2 }{0/0,0/1, 1/0,1/1, 2/0,2/1, 3/2,3/3,3/4, 4/2,4/3,4/4}
  \caption{Mesh pattern classifying connected occurrence graphs with respect to $p = 12$}\label{fig:meshp}
\end{figure}

\begin{theorem}\label{thm:connected}
  Let $c$ be the property of being connected and $p=12$. Then
  \[
    \ms{G}_{p,c} = \Av\left( m \right),
  \]
  where $m$ is the mesh pattern in Figure~\ref{fig:meshp}. The generating function
  for the enumeration of these permutations is
  \[
    \frac{F(x)-x}{(1-x)^2} + \frac{1}{1-x}
  \]
  where $F(x) = 1 - 1/\sum{k! x^k}$ is the generating function for the skew-indecomposable permutations,
  see e.g., Comtet~\cite[p.~261]{comtet}
\end{theorem}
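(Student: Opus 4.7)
The plan is to recognize $G_{12}(\pi)$ as a line graph, translate connectedness into the skew-sum structure of $\pi$, and then read off both the mesh-avoidance characterization and the enumeration.

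\emph{Line-graph reduction.} Let $A(\pi)$ be the graph on $\{1,\ldots,n\}$ whose edges are exactly the index sets of occurrences of $12$ in $\pi$, i.e., pairs $\{i,j\}$ with $i<j$ and $\pi(i)<\pi(j)$. Then $G_{12}(\pi)$ is by definition the line graph $L(A(\pi))$. Since $L(H)$ is connected if and only if $H$ has at most one connected component containing an edge, $G_{12}(\pi)$ is disconnected if and only if $A(\pi)$ has at least two such components.

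\emph{Components are skew factors.} I will identify the connected components of $A(\pi)$ with the factors in the unique skew-indecomposable decomposition $\pi=\beta_1\ominus\cdots\ominus\beta_m$. The easy direction: every value in $\beta_i$ exceeds every value in $\beta_j$ for $i<j$, so every cross-pair is a descent and no edge of $A(\pi)$ crosses the factor boundaries, i.e.\ each $\beta_i$ is a union of components. The converse---that the ascent graph of a skew-indecomposable $\sigma$ is connected---is the main obstacle. I plan to prove it by contradiction: if $A(\sigma)$ had two components $C,C'$, then every point of one must lie upper-left of every point of the other (which would give a skew split of $\sigma$, contrary to indecomposability). Indeed, otherwise one could pick $p\in C$ upper-left of some $q\in C'$ and $p'\in C$ lower-right of that same $q$; any walk from $p$ to $p'$ inside $C$ then contains consecutive vertices $v_i,v_{i+1}$ lying on opposite sides of $q$, which forces $v_i$ and $v_{i+1}$ to be incomparable in the product order and hence non-adjacent in $A(\sigma)$, a contradiction. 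Combining with the line-graph step, $G_{12}(\pi)$ is disconnected if and only if at least two of $\beta_1,\ldots,\beta_m$ have length $\geq 2$ (equivalently, contain $12$).

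\emph{Translating the mesh pattern.} Unshading $m$: the three bottom-left shaded blocks forbid any entry with value strictly below $\pi(i_4)$ at a position strictly less than $i_3$, while the two top-right blocks forbid any entry with value strictly above $\pi(i_4)$ at a position strictly greater than $i_3$. Hence an occurrence of $m$ is precisely a skew split $\pi=\pi_L\ominus\pi_R$ at position $i_3$ in which $i_1i_2$ witnesses $12\subseteq\pi_L$ and $i_3i_4$ witnesses $12\subseteq\pi_R$. This matches exactly the condition from the previous paragraph---having at least two skew-indecomposable factors of length $\geq 2$---so $\ms{G}_{12,c}=\Av(m)$.

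\emph{Generating function.} A permutation avoids $m$ iff at most one of its skew-indecomposable factors has length $\geq 2$; equivalently, $\pi$ is either decreasing or of the form $\delta_j\ominus\beta\ominus\delta_k$ with $j,k\geq 0$ and $\beta$ skew-indecomposable of length $\geq 2$, where $\delta_r$ denotes the decreasing permutation of length $r$. The two families are disjoint (members of the second contain $12$), and their generating functions are $\frac{1}{1-x}$ and $\frac{1}{1-x}\cdot(F(x)-x)\cdot\frac{1}{1-x}$, summing to the stated $\frac{F(x)-x}{(1-x)^2}+\frac{1}{1-x}$.
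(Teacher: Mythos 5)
Your proof is correct and, at its core, follows the same route as the paper: both arguments reduce connectedness of $G_{12}(\pi)$ to the skew-sum decomposition of $\pi$, and the generating-function computation is identical ($1/(1-x)$ for the decreasing permutations plus $(F(x)-x)/(1-x)^2$ for the permutations $\delta_j\ominus\beta\ominus\delta_k$ with exactly one skew-indecomposable factor of length at least $2$). What you add is a proof of the step the paper only asserts, namely that all occurrences of $12$ inside a single skew-indecomposable block form a connected subgraph: viewing $G_{12}(\pi)$ as the line graph of the ascent graph $A(\pi)$ and running the walk argument to show that two components of $A(\sigma)$ would force a skew split is a clean way to close that gap, and is the most valuable part of your write-up. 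Two small points to tidy. First, the walk argument as phrased assumes a single $q\in C'$ has points of $C$ both upper-left and lower-right of it; if instead each $q\in C'$ sees all of $C$ on one side, but different $q$'s disagree, you need the symmetric walk inside $C'$ relative to a fixed $p\in C$ --- the same argument, but the case should be acknowledged. Second, an occurrence of $m$ is not quite ``precisely'' a skew split with an ascent on each side: the shading additionally forces $i_3$ to be the first position of the right part and $\pi(i_4)$ to be its maximum, so the converse direction needs the observation that a skew-indecomposable factor of length at least $2$ never begins with its maximum (otherwise it would split off a singleton). With that remark the equivalence with ``at least two skew-indecomposable factors of length at least $2$'' holds and the rest of your argument, including the enumeration, goes through.
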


\begin{proof}
    If the graph $G_p(\pi)$ is disconnected then $\pi$ has two occurrences
    of $p = 12$ in distinct skew-components, $A$ and $B$, which we can take to
    be consecutive in the skew-decomposition of $\pi = \cdots \ominus A \ominus B \ominus \cdots$.
    Let $ab$ be any
    occurrence of $12$ in $A$. Let $u$ be the highest point in $B$ and $v$ be
    the leftmost point in $B$. Then $abuv$ is an occurrence of the mesh pattern.
    It is clear that an occurrence $abuv$ of the mesh pattern will correspond to
    two vertices $ab$, $uv$ in the occurrence graph, and the shadings ensure
    that there is no path between them.

    The enumeration follows from the fact that these permutations must have
    no, or exactly one, skew-component of size greater than $1$. The first case
    is counted by $1/(1-x)$ while the second case is counted by $(F(x)-x)/(1-x)^2$.
\end{proof}

Note that our software suggests a very large non-classical basis for the
permutations with a tree as an occurrence graph. We omit displaying this basis
here. However, since a graph is a tree if and only if it is a non-empty
connected forest we obtain:

\begin{corollary}
    Let $c$ be the property of being a tree and $p=12$. Then
    \[
      \ms{G}_{p,c} = \Av\left( 123, 1432, 2143, 3214, m \right) \setminus \Av(12),
    \]
    where $m$ is the mesh pattern in Figure~\ref{fig:meshp}.
\end{corollary}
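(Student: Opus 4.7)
The plan is to exploit the standard characterization of a tree as a non-empty connected forest, which reduces the statement to a routine intersection of the permutation classes already established in Theorem~\ref{thm:forest} and Theorem~\ref{thm:connected}, together with the avoidance condition that forbids the empty graph.

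Concretely, I would argue as follows. First I would observe that $G_p(\pi)$ is a tree if and only if $G_p(\pi)$ is non-empty, connected, and a forest. The non-emptiness condition says precisely that $\pi$ has at least one occurrence of $p = 12$, i.e.\ $\pi \notin \Av(12)$. By Theorem~\ref{thm:forest}, the forest condition on $G_p(\pi)$ is equivalent to $\pi \in \Av(123, 1432, 2143, 3214)$. By Theorem~\ref{thm:connected}, the connectedness condition is equivalent to $\pi \in \Av(m)$, where $m$ is the mesh pattern of Figure~\ref{fig:meshp}. Intersecting these three conditions yields
\[
  \ms{G}_{p,c} = \bigl( \Av(123, 1432, 2143, 3214) \cap \Av(m) \bigr) \setminus \Av(12) = \Av(123, 1432, 2143, 3214, m) \setminus \Av(12),
\]
which is exactly the claim.

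There is no genuine obstacle here: all of the work has been done in the preceding two theorems, and the only ingredient added is the trivial equivalence between $G_p(\pi)$ being non-empty and $\pi$ containing at least one copy of $12$. The one bookkeeping point worth flagging in the write-up is that the set-theoretic identity $(A \cap B) \setminus C = \Av(M_A \cup M_B) \setminus C$ is legitimate because $\Av(M_A) \cap \Av(M_B) = \Av(M_A \cup M_B)$ by definition of pattern avoidance.
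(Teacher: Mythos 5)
Your proof is correct and follows exactly the paper's route: the paper likewise invokes the characterization of a tree as a non-empty connected forest, intersects the classes from Theorems~\ref{thm:forest} and~\ref{thm:connected}, and removes the decreasing permutations (precisely $\Av(12)$) because they have empty occurrence graphs. Your write-up merely spells out the set-theoretic bookkeeping that the paper leaves implicit.
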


\begin{proof}
    This follows from Theorems~\ref{thm:forest} and~\ref{thm:connected}. We must
    remove the decreasing permutations since they all have empty occurrence
    graphs.
\end{proof}

We end with proving the enumeration for the permutations in the corollary above.
The proof is a rather tedious, but simple, induction proof.

\begin{theorem}\label{thm:nr-of-12-trees}
  The number of permutations of length $n$ in $\ms{G}_{12,\text{tree}}$ is $(n-1)^2$.
\end{theorem}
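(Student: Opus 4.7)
The plan is to decompose every $\pi\in\ms{G}_{12,\text{tree}}$ according to its skew-components, reduce the count to that of a small family of skew-indecomposable building blocks, and then carry out a case analysis to count those blocks.

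By the corollary above, every $\pi\in\ms{G}_{12,\text{tree}}$ avoids the four classical patterns $123,1432,2143,3214$ together with the mesh pattern $m$ of Figure~\ref{fig:meshp}, and contains at least one occurrence of $12$. Because every occurrence of $12$ lies inside a single skew-component of $\pi$, and because $m$ obstructs two skew-components from each containing a $12$ (as in the proof of Theorem~\ref{thm:connected}), exactly one skew-component $\mu$ of $\pi$ has size at least $2$, giving the unique decomposition
\[
  \pi \;=\; \underbrace{1\ominus\cdots\ominus 1}_{a}\ominus\mu\ominus\underbrace{1\ominus\cdots\ominus 1}_{b},\qquad a,b\ge 0,\ k:=|\mu|\ge 2,\ a+k+b=n,
\]
in which $\mu$ is automatically skew-indecomposable. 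I would verify that $\mu$ must lie in $\Av(123,1432,2143,3214)$, and that conversely every such construction belongs to $\ms{G}_{12,\text{tree}}$. The main observation is that the extremal-value singleton attached by $\ominus$ has the wrong rank to appear in any occurrence of one of the four classical patterns, so those patterns in $\pi$ correspond exactly to those in $\mu$; moreover, only one skew-component of $\pi$ carries any $12$, so $G_{12}(\pi)\cong G_{12}(\mu)$ is automatically connected. Writing $U_k$ for the number of skew-indecomposable permutations of length $k$ in $\Av(123,1432,2143,3214)$, this yields
\[
T_n\;:=\;\bigl|\ms{G}_{12,\text{tree}}\cap\mf S_n\bigr|\;=\;\sum_{k=2}^{n}(n-k+1)\,U_k.
\]

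The problem therefore reduces to the key lemma $U_2=1$ and $U_k=2$ for every $k\ge 3$, which is the main obstacle and the tedious induction the authors allude to. The base cases $k\in\{2,3\}$ are immediate by inspection. For the inductive step, any $123$-avoider is the shuffle of its two decreasing subsequences (the left-to-right minima and the remaining entries), and skew-indecomposability forces the value $k$ to occur at a position $\ge 2$ and the value $1$ at a position $\le k-1$. A case analysis on the positions of $1$ and $k$, combined with the avoidance of $1432,2143,3214$, leaves exactly two permutations in each length $k\ge 3$. Granting this lemma, the theorem follows by direct computation:
\[
  T_n \;=\; (n-1)+2\sum_{k=3}^{n}(n-k+1) \;=\; (n-1)+2\binom{n-1}{2} \;=\; (n-1)^2.
\]
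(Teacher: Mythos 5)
Your overall strategy is sound and genuinely different from the paper's: the appendix proves the count by a direct induction on the prefix construction $k\succ\pi$, with a four-way case split on the value of $\pi'(1)$, whereas you factor each tree permutation as $\delta_a\ominus\mu\ominus\delta_b$ (decreasing runs around the unique skew-component $\mu$ of size at least $2$), which is justified by the proof of Theorem~\ref{thm:connected}. Since the four classical basis elements are themselves skew-indecomposable, they occur in $\pi$ exactly when they occur in $\mu$, and $G_{12}(\pi)\cong G_{12}(\mu)$ is a non-empty connected forest precisely when $\mu$ is a skew-indecomposable permutation of length at least $2$ in $\Av(123,1432,2143,3214)$. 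The convolution $T_n=\sum_{k\ge 2}(n-k+1)U_k$ and the closing arithmetic are correct. This is an attractive reduction: it isolates the combinatorics in a single clean statement about skew-indecomposable permutations rather than a branching induction.

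However, there is a genuine gap: the key lemma $U_2=1$, $U_k=2$ for $k\ge 3$ is asserted rather than proved, and it is exactly where all the difficulty of the theorem lives. The two constraints you extract from skew-indecomposability (the value $k$ does not sit in position $1$, the value $1$ does not sit in position $k$) are necessary but nowhere near sufficient: for length $4$ they are satisfied by all five skew-indecomposable $123$-avoiders $1432$, $2143$, $2413$, $3142$, $3214$, of which only $2413$ and $3142$ survive the remaining pattern conditions; for length $5$ the survivors are $35142$ and $42513$. The sentence ``a case analysis on the positions of $1$ and $k$ \ldots{} leaves exactly two permutations'' is precisely the tedious argument that needs to be written out; nothing in the sketch explains why the count stabilises at two, nor identifies the two survivors (they are reverse-complements of one another, extending $2413$ and $3142$). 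To complete the proof you would need to pin down the structure of a skew-indecomposable permutation avoiding all four patterns explicitly --- for instance by induction on $k$, showing that deleting suitable extremal entries of such a $\mu$ yields the unique shorter instance --- and verify that exactly two permutations of each length $k\ge 3$ arise. Until that lemma is established, the argument is a correct reduction of the theorem to an unproven claim of essentially the same difficulty.
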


\section{Future work}
\label{future-work}

We expect the conjectures in lines~2 and~3 in Tables~\ref{tab:bipartite}
and~\ref{tab:forest} to follow from an analysis of the cycle structure of
occurrence graphs with respect to the patterns $123$ and $132$, similar to what
we did in Proposition~\ref{prop:length-of-cycle-with-p-12} for the pattern $12$.

Other natural hereditary graph properties to consider would be $k$-colorable
graphs, for $k > 2$, as these are supersets of bipartite graphs. Also
planar graphs, which lie between forests and $4$-colorable graphs.

It might also be interesting to consider the intersection $\bigcap_{p \in M}
\ms{G}_{p,c}$ where $M$ is some set of patterns, perhaps all.

We would like to note that Jason Smith~\cite{jsmith} independently defined
occurrence graphs and used them to prove a result on the shellability of a large
class of intervals of permutations.

\appendix
\section{Proof of Theorem~\ref{thm:nr-of-12-trees}}
\label{sec:extraproofs}

We start by introducing a new notation.

\begin{definition}\label{defn:prefix-permutation}
 Let $\pi\in\mf{S}_n$ and $k$ be an integer such that $1 \leq k
 \leq n+1$. The \emph{$k$-prefix of $\pi$} is the permutation
 $\pi'\in\mf{S}_{n+1}$ defined by $\pi'(1) = k$ and \[ \pi'(i+1) =
 \left\{\begin{array}{ll} \pi(i) &\mbox{if } \pi(i) < k, \\ \pi(i)+1
 &\mbox{if } \pi(i) \geq k \end{array}\right. \] for $i=1,\ldots,n$.
 We denote $\pi'$ with $k\succ\pi$. In a similar way we define
 the \emph{$k$-postfix of $\pi$} as the permutation $\pi\prec k$ in
 $\mf{S}_{n+1}$.
\end{definition}

\begin{example}\label{ex:prefix-permutation}
 Let $\pi=42135$ and $k=2$. Visually, if we draw the grid
 representation of $\pi$, we are putting the new number $k$ to the
 left on the $x$-axis and raising all the numbers $\geq k$ on the
 $y$-axis by one. Thus we have $2\succ 42135 = 253146$. See Figure~\ref{fig:2-prefix-of-42135}
\end{example}

\begin{figure}[htbp]
 \begin{gather*}
   \begin{tikzpicture}[scale=.4, baseline={([yshift=-3pt]current bounding box.center)}]
     \def \n {5}
     \foreach \x in {1,...,\n} {
       \draw[gray] (0,\x) -- (\n+1,\x);
       \draw[gray] (\x,0) -- (\x,\n+1);
     }
     \foreach \x in {(1,4),(2,2),(3,1),(4,3),(5,5)} {\fill[black] \x circle (5pt);}
     \fill[gray] (0,1.5) circle (7pt);
   \end{tikzpicture}
   \;\longmapsto\;
   \begin{tikzpicture}[scale=.4, baseline={([yshift=-3pt]current bounding box.center)}]
     \def \n {6}
     \foreach \x in {1,...,\n} {
       \draw[gray] (0,\x) -- (\n+1,\x);
       \draw[gray] (\x,0) -- (\x,\n+1);
     }
     \foreach \x in {(2,5),(3,3),(4,1),(5,4),(6,6)} {\fill[black] \x circle (5pt);}
     \fill[gray] (1,2) circle (7pt);
   \end{tikzpicture}
   \end{gather*}
 \caption{The $2$-prefix of $42135$ is $253146$}\label{fig:2-prefix-of-42135}
\end{figure}
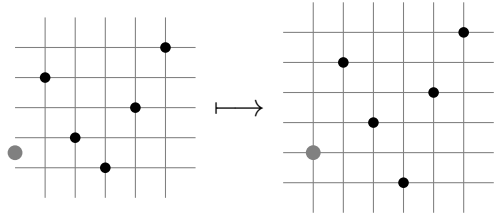

%

We note that for every permutation $\pi'\in\mf{S}_{n+1}$ there is
one and only one pair $(k,\pi)$ such that $\pi' = k\succ\pi$. We let
$k=\pi'(1)$ and $\pi = \st(\pi'(2) \cdots \pi'(n+1))$.

\begin{proof}[Proof of~\ref{thm:nr-of-12-trees}]
  Let $p=12$. We start by considering three base-cases.

  For $n=1$ the occurrence graph is the empty graph. For $n=2$ we
  get two occurrence graphs: $G_p(12)$ is a single node graph and
  $G_p(21)$ is the empty graph. For $n=3$ we have $3!=6$ different
  permutations $\pi$. Of those we calculate that $132$, $213$, $231$
  and $312$ result in connected occurrence graphs on one or two nodes
  but $G_p(123)$ is a triangle and $G_p(321)$ is the empty graph.

  We have thus showed that the claimed enumeration is true for
  $n=1,2,3$.

  For the inductive step we assume $n \geq 4$ and let $\pi$ be a
  permutation of length $n$. We look at four different cases of $k$
  to construct $\pi' = k\succ\pi$. We let $x$, $y$ and $z$ be the
  indices of $n-1$, $n$ and $n+1$ in $\pi'$ respectively.

  \begin{enumerate}[(I)]
    \item \underline{$k\leq n-2$}:
      The index sets $\{1,x\}$, $\{1,y\}$ and $\{1,z\}$ of $p$ in
      $\pi'$ all share exactly one common element and thus form a
      triangle in $G_p(\pi')$. Therefore there are no permutations
      $\pi$ resulting in the occurrence graph $G_p(\pi')$ being a
      tree.

    \item \underline{$k=n-1$}:
      Let $T(n+1)$ denote the number of permutations $\pi'$ of
      length $n+1$ with $\pi'(1)=n-1$ such that $G_p(\pi')$ is a
      tree. Note that $T(1)=T(2)=0$, $T(3)=1$ and $T(4)=2$. In order
      to obtain a formula for $T$ we need to look at a few subcases:

      \begin{enumerate}[i)]
        \item If $y<z$ then $\{1,y\}$, $\{1,z\}$ and $\{y,z\}$ form
          an triangle in $G_p(\pi')$, see Figure~\ref
          {fig:tree-proof-(n-1)-y_le_z}. Independent of the
          permutation $\pi$, the graph $G_p(\pi')$ is not a tree.

          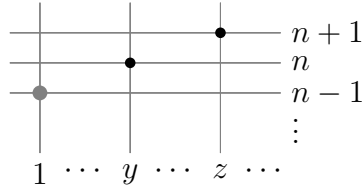
\begin{figure}[htbp]\begin{gather*}
            \begin{tikzpicture}[scale=.4, baseline={([yshift=-3pt]current bounding box.center)}]
                \foreach \x in {1,4,7} {
                  \foreach \y in {2,3,4} {
                    \draw[gray] (0,\y) -- (9,\y);
                    \draw[gray] (\x,0) -- (\x,5);
                  }
                } \fill[gray] (1,2) circle (7pt);
                \foreach \x in {(4,3),(7,4)} {\fill[black] \x circle (5pt);}
                \node[below] at (1,0) {$1$};
                \node[below] at (2.5,0) {$\cdots$};
                \node[below] at (4,0) {$y$};
                \node[below] at (5.5,0) {$\cdots$};
                \node[below] at (7,0) {$z$};
                \node[below] at (8.5,0) {$\cdots$};
                \node[right] at (9,4) {$n+1$};
                \node[right] at (9,3) {$n$};
                \node[right] at (9,2) {$n-1$};
                \node[right] at (9,1) {$\vdots$};
            \end{tikzpicture}\end{gather*}
            \caption{$k=n-1$ and $y<z$}\label{fig:tree-proof-(n-1)-y_le_z}
          \end{figure}

        \item Assume $y>z$ and $z \neq 2$, see
          Figure~\ref{fig:tree-proof-(n-1)-y_gr_z-z_neq_2}. Then $\pi'(2) < n-1$
          and $\{1,z\}$, $\{2,z\}$, $\{2,y\}$ and $\{1,y\}$ form a
          cycle of length $4$ in $G_p(\pi')$, resulting in it not
          being a tree.

          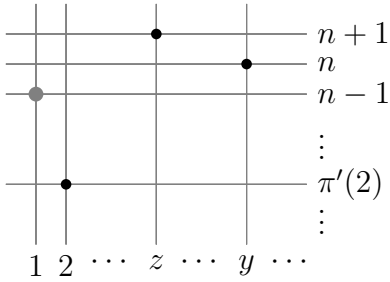
\begin{figure}[htbp]\begin{gather*}
            \begin{tikzpicture}[scale=.4, baseline={([yshift=-3pt]current bounding box.center)}]
                \foreach \x in {1,2,5,8} {
                  \foreach \y in {2,5,6,7} {
                    \draw[gray] (0,\y) -- (10,\y);
                    \draw[gray] (\x,0) -- (\x,8);
                  }
                } \fill[gray] (1,5) circle (7pt);
                \foreach \x in {(2,2),(5,7),(8,6)} {\fill[black] \x circle (5pt);}
                \node[below] at (1,0) {$1$};
                \node[below] at (2,0) {$2$};
                \node[below] at (3.5,0) {$\cdots$};
                \node[below] at (5,0) {$z$};
                \node[below] at (6.5,0) {$\cdots$};
                \node[below] at (8,0) {$y$};
                \node[below] at (9.5,0) {$\cdots$};
                \node[right] at (10,7) {$n+1$};
                \node[right] at (10,6) {$n$};
                \node[right] at (10,5) {$n-1$};
                \node[right] at (10,3.5) {$\vdots$};
                \node[right] at (10,2) {$\pi'(2)$};
                \node[right] at (10,1) {$\vdots$};
            \end{tikzpicture}\end{gather*}
            \caption{$k=n-1$, $y>z$ and $z \neq 2$}\label{fig:tree-proof-(n-1)-y_gr_z-z_neq_2}
          \end{figure}

        \item Now lets assume $y>z$ and $z=2$, see
          Figure~\ref{fig:tree-proof-(n-1)-y_gr_z-z_eq_2}.

          \begin{figure}[htbp]\begin{gather*}
            \begin{tikzpicture}[scale=.4, baseline={([yshift=-3pt]current bounding box.center)}]
                \foreach \x in {1,2,3,6} {
                  \foreach \y in {2,5,6,7} {
                    \draw[gray] (0,\y) -- (8,\y);
                    \draw[gray] (\x,0) -- (\x,8);
                  }
                } \fill[gray] (1,5) circle (7pt);
                \foreach \x in {(2,7),(3,2),(6,6)} {\fill[black] \x circle (5pt);}
                \node[below] at (1,0) {$1$};
                \node[below] at (2,0) {$2$};
                \node[below] at (3,0) {$3$};
                \node[below] at (4.5,0) {$\cdots$};
                \node[below] at (6,0) {$y$};
                \node[below] at (7.5,0) {$\cdots$};
                \node[right] at (8,7) {$n+1$};
                \node[right] at (8,6) {$n$};
                \node[right] at (8,5) {$n-1$};
                \node[right] at (8,3.5) {$\vdots$};
                \node[right] at (8,2) {$\pi'(3)$};
                \node[right] at (8,1) {$\vdots$};
            \end{tikzpicture}\end{gather*}
            \caption{$k=n-1$, $y>z$ and $z=2$}\label{fig:tree-proof-(n-1)-y_gr_z-z_eq_2}
          \end{figure}
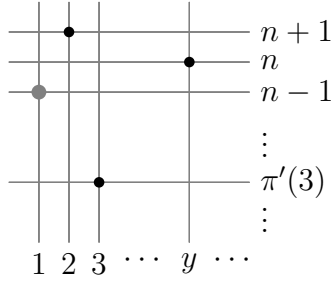

          If $y \geq 5$ then the vertices $\{1,y\}$, $\{3,y\}$ and
          $\{4,y\}$ form a cycle in $G_p(\pi')$.

          If $y = 3$ then $\{1,2\}$ and $\{1,3\}$ will be an
          isolated path component in $G_p(\pi')$, making $\pi' =
          (n-1)(n+1)n(n-2) \cdots 1$ the only permutation such that
          the occurrence graph $G_p(\pi')$ is a tree.

          Now fix $y=4$ and lets look at some subsubcases for the
          value of $\pi'(3)$.

          \begin{enumerate}[a)]
            \item If $\pi'(3) \leq n-4$ then $\pi'(3)n$,
              $\pi'(3)(n-2)$ and $\pi'(3)(n-3)$ are all occurrences
              of $p$ in $\pi'$, with the respective index sets
              forming an triangle in $G_p(\pi')$.
            \item If $\pi'(3) = n-2$ then $\pi' =
              (n-1)(n+1)(n-2)n(n-3) \cdots 1$ is the only
              permutation resulting in $G_p(\pi')$ being a tree.
            \item If $\pi'(3) = n-3$ we look at
              Figure~\ref{fig:tree-proof-(n-1)-y_eq_4-z_eq_2} where the
              permutation $\pi'$ is shown.

              \begin{figure}[htbp]\begin{gather*}
                \begin{tikzpicture}[scale=.4, baseline={([yshift=-3pt]current bounding box.center)}]
                    \foreach \x in {1,2,3,4} {
                      \foreach \y in {2,3,4,5,6} {
                        \draw[gray] (0,\y) -- (6,\y);
                        \draw[gray] (\x,0) -- (\x,7);
                      }
                    } \fill[gray] (1,4) circle (7pt);
                    \foreach \x in {(2,6),(3,2),(4,5)} {\fill[black] \x circle (5pt);}
                    \node[below] at (1,0) {$1$};
                    \node[below] at (2,0) {$2$};
                    \node[below] at (3,0) {$3$};
                    \node[below] at (4,0) {$4$};
                    \node[below] at (5.5,0) {$\cdots$};
                    \node[right] at (6,6) {$n+1$};
                    \node[right] at (6,5) {$n$};
                    \node[right] at (6,4) {$n-1$};
                    \node[right] at (6,3) {$n-2$};
                    \node[right] at (6,2) {$n-3$};
                    \node[right] at (6,1) {$\vdots$};
                \end{tikzpicture}\end{gather*}
                \caption{$k=n-1$, $y=4$ and $z=2$}\label{fig:tree-proof-(n-1)-y_eq_4-z_eq_2}
              \end{figure}
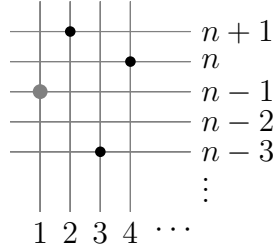

              The permutation $\sigma = \st(\pi'(3)\cdots\pi'(n+1))$
              is just like $\pi'$ in the case $k=n-1$ and $z=2$,
              only the length of $\sigma$ is $n-1$. Because $\{1,2\}$
              is a vertex in $G_p(\sigma)$ the occurrence graph of
              $p$ in $\sigma$ is not the empty graph. Thus it is
              easy to see that $G_p(\pi')$ is a tree if and only if
              $G_p(\sigma)$ is a tree, and according to the
              aforementioned case there are $T(n-1)$ such
              permutations $\sigma$.
          \end{enumerate}

          \noindent Summing up the subsubcases there are a total of
          $1+1+T(n-1)$ permutations $\pi'$ making the occurrence
          graph a tree, i.e.,~$T(n+1)=2+T(n-1)$. Because $T(4)=2$
          and $T(3)=1$ we deduce that $T(n+1)=n-1$.
      \end{enumerate}

      \noindent The whole case $k=n-1$ gives us that there are $n-1$
      permutation $\pi'$ such that $G_p(\pi')$ is a tree.

    \item \underline{$k=n$}:
      We need to examine three subcases:

      \begin{enumerate}[i)]
        \item If $z \geq 4$ then $\{1,z\}$, $\{2,z\}$, $\{3,z\}$ are
          all index sets of $p$ in $\pi'$, forming a triangle in
          $G_p(\pi')$.

        \item If $z = 3$, then $\{1,2\}$ is an index set of $p$ in
          $\pi$ making the occurrence graph $G_p(\pi)$ non-empty,
          see Figure~\ref{fig:tree-proof-n-z_eq_3}.

          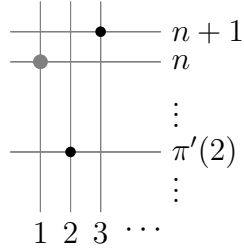
\begin{figure}[htbp]\begin{gather*}
            \begin{tikzpicture}[scale=.4, baseline={([yshift=-3pt]current bounding box.center)}]
                \foreach \x in {1,2,3} {
                  \foreach \y in {2,5,6} {
                    \draw[gray] (0,\y) -- (5,\y);
                    \draw[gray] (\x,0) -- (\x,7);
                  }
                } \fill[gray] (1,5) circle (7pt);
                \foreach \x in {(2,2),(3,6)} {\fill[black] \x circle (5pt);}
                \node[below] at (1,0) {$1$};
                \node[below] at (2,0) {$2$};
                \node[below] at (3,0) {$3$};
                \node[below] at (4.5,0) {$\cdots$};
                \node[right] at (5,6) {$n+1$};
                \node[right] at (5,5) {$n$};
                \node[right] at (5,3.5) {$\vdots$};
                \node[right] at (5,2) {$\pi'(2)$};
                \node[right] at (5,1) {$\vdots$};
            \end{tikzpicture}\end{gather*}
            \caption{$k=n$ and $z=3$}\label{fig:tree-proof-n-z_eq_3}
          \end{figure}

          If $\pi'(2) \leq n-3$ then $\pi'(2)(n+1)$, $\pi'(2)(n-1)$
          and $\pi'(2)(n-2)$ are all occcurrences of $p$ in $\pi'$,
          resulting in $G_p(\pi')$ having a triangle.

          If $\pi'(2) = n-1$ then $\{1,3\}$ and $\{2,3\}$ is an
          isolated path component in $G_p(\pi')$ and $\pi' =
          n(n-1)(n+1)(n-2) \cdots 1$ is the only permutation such
          that the occurrence graph is a tree.

          We therefore assume $\pi'(2) = n-2$, see Figure~\ref
          {fig:tree-proof-n-z_eq_3-pi2_eq_n-2}.

          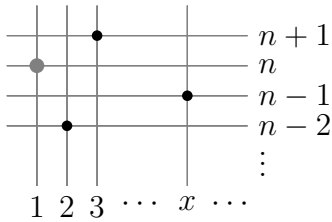
\begin{figure}[htbp]\begin{gather*}
            \begin{tikzpicture}[scale=.4, baseline={([yshift=-3pt]current bounding box.center)}]
                \foreach \x in {1,2,3,6} {
                  \foreach \y in {2,3,4,5} {
                    \draw[gray] (0,\y) -- (8,\y);
                    \draw[gray] (\x,0) -- (\x,6);
                  }
                } \fill[gray] (1,4) circle (7pt);
                \foreach \x in {(2,2),(3,5),(6,3)} {\fill[black] \x circle (5pt);}
                \node[below] at (1,0) {$1$};
                \node[below] at (2,0) {$2$};
                \node[below] at (3,0) {$3$};
                \node[below] at (4.5,0) {$\cdots$};
                \node[below] at (6,0) {$x$};
                \node[below] at (7.5,0) {$\cdots$};
                \node[right] at (8,5) {$n+1$};
                \node[right] at (8,4) {$n$};
                \node[right] at (8,3) {$n-1$};
                \node[right] at (8,2) {$n-2$};
                \node[right] at (8,1) {$\vdots$};
            \end{tikzpicture}\end{gather*}
            \caption{$k=n$, $z=3$ and $\pi'(2)=n-2$}\label{fig:tree-proof-n-z_eq_3-pi2_eq_n-2}
          \end{figure}

          Let $\sigma = \st(\pi'(2) \cdots \pi'(n+1))$. Note that
          the occurrence graphs $G_p(\pi')$ and $G_p(\sigma)$ are
          the same except the former has an the extra vertex
          $\{1,2\}$ and an edge connecting it to a graph
          corresponding to $G_p(\sigma)$. Therefore, $G_p(\pi')$ is
          a tree if and only if $G_p(\sigma)$ is a tree.

          Note that $\sigma(1)=n-2$ and $\sigma(2)=n$ and therefore
          $\sigma$ is like $\pi'$ in the case $k=n-1$ and $z=2$ as
          in Figure~\ref{fig:tree-proof-(n-1)-y_gr_z-z_eq_2}, only
          of length $n$ instead of $n+1$. By the same reasoning as
          in that case the number of permutations $\sigma$ (and
          therefore $\pi'$) such that $G_p(\pi')$ is a tree is
          $T(n)=n-2$.

        \item If $z = 2$, then $\{1,2\}$ is an isolated vertex in
          $G_p(\pi')$, see Figure~\ref{fig:tree-proof-n-z_eq_2}. The
          occurrence graph of $p$ in $\pi'$ is a tree if and only if
          $G_p(\pi)$ is the empty graph which is true if and only if
          $\pi$ is the decreasing permutation. Therefore there is
          only one permutation $\pi' = n(n+1)(n-1) \ldots 1$ such
          that $G_p(\pi')$ is a tree.

          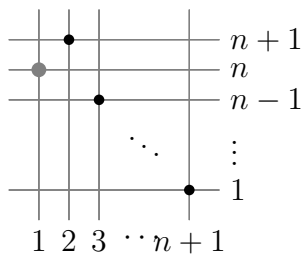
\begin{figure}[htbp]\begin{gather*}
            \begin{tikzpicture}[scale=.4, baseline={([yshift=-3pt]current bounding box.center)}]
                \foreach \x in {1,2,3,6} {
                  \foreach \y in {1,4,5,6} {
                    \draw[gray] (0,\y) -- (7,\y);
                    \draw[gray] (\x,0) -- (\x,7);
                  }
                } \fill[gray] (1,5) circle (7pt);
                \foreach \x in {(2,6),(3,4),(6,1)} {\fill[black] \x circle (5pt);}
                \node[below] at (1,0) {$1$};
                \node[below] at (2,0) {$2$};
                \node[below] at (3,0) {$3$};
                \node[below] at (4.5,0) {$\cdots$};
                \node[below] at (6,0) {$n+1$};
                \node        at (4.5,2.7) {$\ddots$};
                \node[right] at (7,6) {$n+1$};
                \node[right] at (7,5) {$n$};
                \node[right] at (7,4) {$n-1$};
                \node[right] at (7,2.5) {$\vdots$};
                \node[right] at (7,1) {$1$};
            \end{tikzpicture}\end{gather*}
            \caption{$k=n$ and $z=2$}\label{fig:tree-proof-n-z_eq_2}
          \end{figure}
      \end{enumerate}

      \noindent To sum up the the case $k=n$ there are $1+(n-2)+1 =
      n$ permutation $\pi'$ such that $G_p(\pi')$ is a tree.

    \item \underline{$k=n+1$}:
      Every occurrence $\pi(i)\pi(j)$ of $p$ in $\pi$ is also an
      occurrence of $p$ in $\pi'$, but with index set $\{ i+1, j+1
      \}$ instead of $\{ i, j \}$. There are no more occurrences of
      $p$ in $\pi'$ because $\pi'(1)=n+1 > \pi'(j')$ for every $j'>1$
      so $\pi'(1)\pi'(j')$ is not an occurrence of $p$ for any $j'>1$.

      This means that $G_{12}(\pi') \cong G_{12}(\pi)$ so by the
      induction hypothesis we obtain that there are $(n-1)^2$
      permutations $\pi'$ such that the occurrence graph is a tree
      for this value of $k$.

  \end{enumerate}

  \noindent To sum up the four instances there is a total of $0 +
  (n-1) + n + (n-1)^2 = n^2$ permutations $\pi'$ such that
  $G_p(\pi')$ is a tree.
\end{proof}

\bibliographystyle{alpha}
\bibliography{occgraphs}

\end{document}